\documentclass[12pt]{article}
\usepackage[utf8]{inputenc}

\usepackage{preprint-layout}
\usepackage{preprint-notation}
\usepackage{doi}
%%%%%%%%%%%%%%%%%%%%%%%%%%%%%%%%%%%%%%%%%%%%%%%%%%%%%%%%%%%%%%%%%%%%%%%

%\usepackage{todonotes}
\usepackage{bm}

\begin{document}

\title{Graded Parametric Cut{FEM} and Cut{IGA} for Elliptic Boundary Value Problems\\ in Domains with Corners}

\author{Tobias Jonsson, Mats G. Larson and Karl Larsson}
\date{}

\newcommand{\us}{u_\mathrm{s}}
\newcommand{\ur}{u_\mathrm{r}}
\newcommand{\omegac}{\omega_c}

\newcommand{\hatD}{\widehat{D}}
\newcommand{\hatpi}{\widehat{\pi}}
\newcommand{\hatrho}{\widehat{\rho}}
\newcommand{\hatR}{\widehat{R}}
\newcommand{\hatP}{\widehat{P}}
\newcommand{\hattheta}{\hat{\theta}}
\newcommand{\hatpartial}{\widehat{\partial}}
\newcommand{\hatmcK}{\widehat{\mcK}}
\newcommand{\hatI}{\widehat{I}}
\newcommand{\hatOmega}{\widehat{\Omega}}
\newcommand{\hatV}{\widehat{V}}
\newcommand{\hatA}{\widehat{A}}
\newcommand{\hatK}{\widehat{K}}
\newcommand{\hatnabla}{\widehat{\nabla}}

\newcommand{\hatmu}{\mu}

\renewcommand{\hatf}{\hat{f}}
\renewcommand{\hatg}{\hat{g}}
\renewcommand{\hatr}{\hat{r}}
\renewcommand{\hats}{\hat{s}}
\renewcommand{\hatu}{\hat{u}}
\renewcommand{\hatv}{\hat{v}}
\renewcommand{\hatw}{\hat{w}}
\renewcommand{\hath}{\hat{h}}
\renewcommand{\hatn}{\hat{n}}
\renewcommand{\hatx}{\hat{x}}
\renewcommand{\haty}{\hat{y}}
\renewcommand{\hatpartial}{\hat{\partial}}

\numberwithin{equation}{section} 

\maketitle
%%%%%%%%%%%%%%%%%%%%%%%%%%%%%%%%%%%%%%%%%%%%%%%%%%%%%%%%%%%%%%%%%%%%%%%
\begin{abstract}
We develop a parametric cut finite element method for elliptic boundary value problems with corner singularities where we have weighted control of higher order derivatives of the solution to a neighborhood of a point at the boundary. Our approach is based on identification of a suitable mapping that grades the mesh towards the singularity. In particular, this mapping may be chosen without identifying the opening angle at the corner. We employ cut finite elements together with Nitsche boundary conditions and stabilization in the vicinity of the boundary. We prove that the method is stable and convergent of optimal order in the energy norm and $L^2$ norm. This is achieved by mapping to the reference domain where we employ a structured mesh.
\end{abstract}

\section{Introduction} 

A classical issue in the use and development of finite element methods (FEMs) are problems where the exact solution may be singular at certain points, due to the presence of nonconvex corners or jumps in data. In such cases standard FEMs perform poorly as the low regularity of the solution causes loss of convergence. By utilizing known information about the singularities, various methods that regain optimal order convergence have been devised. In this contribution we consider problems with nonconvex corners in the context of an unfitted finite element method -- the cut finite element method (CutFEM) -- and propose a CutFEM for which we prove stability and optimal order error estimates.

\paragraph{Cut{FEM} and Cut{IGA}.}
CutFEM is a framework for finite element methods where the physical domain may cut the computational mesh arbitrarily while retaining the optimal approximation and stability properties of standard FEMs, see \cite{BuHa2012,BuClHaLaMa15}.
This framework is applicable also to isogeometric analysis (IGA), which
combines powerful spline approximation spaces on structured computational meshes with high precision geometry descriptions via CAD, see, e.g., \cite{IGA,IGABook,MR3372009}. In general, IGA generates a cut computational mesh as the CAD consists of patchwise parametric mappings and trim curves. It is therefore natural to utilize the mathematically rigorous CutFEM framework also in IGA.
We denote this combination of techniques cut isogeometric analysis (CutIGA), which we have previously explored in \cite{ElfLarLar18a,ElfLarLar18b}.

% relation to previous work
\paragraph{Previous Work.} Over the years considerable efforts have been made to construct special finite element methods that deal with singularities arising from domains with (nonconvex) corners or jumps in data, see the very extensive survey \cite{LiLu2000} and the references therein.
A great deal about these singularities is actually known, especially in the planar domain case, and this information is also typically needed to construct a method that efficiently deals with singularities.
The survey \cite{LiLu2000} classifies methods into the following three categories:
\begin{enumerate}[label=\emph{\Roman*.}]
\item \emph{Methods involving local refinement.} These are based on the assumption that the location of the singular point is known, and that the behavior of the solution when approaching the singular point is
\begin{equation} \label{eq:assumption}
u = \mathcal{O}(r^\alpha) \,, \qquad 0<\alpha < 1 \,, \qquad \text{as $r\to 0$} \,,
\end{equation}
where $r$ is the distance to the singular point. Most methods belong to this category, as does the method in the present work, and it includes standard approaches such as $h/p$-FEM,  and various parametric techniques.
If the location of the singular point is unknown it is possible to instead use an adaptive procedure where the approximation space is iteratively tuned to the problem. Such procedures are typically based on a posteriori error indicators and the tuning can consist of local mesh refinement ($h$-refinement), locally increasing the polynomial order ($p$-refinement), and relocation of the mesh ($r$-refinement), see, e.g., \cite{AS1997,BanRan2003,MR3627181}.

\item \emph{Methods supplementing the approximation space with singular functions.}
Here the assumption is that the leading singular functions in the expansion of the solution when $r\to 0$ are known. These can then be supplemented to the usual approximation space.
Typically, one or two singular functions is sufficient to resolve the singular effects. In this category we also find methods such as XFEM/GFEM, see \cite{BelGraVen2009,FriBel2010}.

\item \emph{Combined methods.}
This final category is based on the assumption that the complete expansion of the solution, i.e., both the singular and analytical parts, is locally known in a singular subdomain $r \leq r_0$.
\end{enumerate}
In the context of IGA there have been some recent contributions dealing with corner singularities. A method belonging to the first category was proposed in \cite{JeOhKaKi2013} where the parametric mappings describing the geometry were modified to grade the approximation space appropriately towards the singularity. In another contribution \cite{OhKiJe2014} a method related to the first and second category was presented in which the approximation space was enriched by certain basis functions constructed using push forward operations with a mapping similar to the one in \cite{JeOhKaKi2013}.

The present work belongs to the first category and is based on a mapping in the form of simple radial scaling, which has the effect of a graded mesh. As the CutFEM uses a geometric description of the domain that is independent of the computational mesh, the mapping only affects the computational mesh, not the domain. The simple expression for the mapping allows us to conveniently reformulate the method in a way that avoids numerical instabilities due to large derivatives of the mapping close to the singular point.
Also, using a mapping which is smooth everywhere (except at the singular point) means that the regularity of the discrete approximation space will not be affected, making the method suitable also for IGA approximation spaces of arbitrarily high regularity.

\paragraph{New Contributions.}

We develop a CutFEM for elliptic problems for domains with corners, including treatment of singularities arising at nonconvex corners.
Our approach is based on  identification of a suitable mapping from a reference domain, where a 
quasi-uniform mesh is used, that grades the mesh towards the singular corner. The mapping 
is bijective but has zero derivative in the corner. We employ weak enforcement of Dirichlet 
conditions, allow cut elements at the boundary and stabilize the method using Ghost 
penalty, see \cite{Burman2010,MaLaLoRo2013a}. We prove that the method is stable by mapping to the 
reference domain and utilizing the additional stability provided by the Ghost penalty.
Furthermore, we can prove optimal order a priori error estimates in the energy and $L^2$ 
norm using a bound on derivatives of order $k$ in the reference domain in terms of a weighted 
norm in the physical domain which is bounded for the singular solution.
The analysis holds for polygonal domains and piecewise smooth boundaries.
Furthermore, the method is not sensitive 
to the choice of grading parameter as long as the grading is strong enough. As a consequence, 
the grading that works for a corner with opening angle close to $2\pi$ works for all angles. 
This observation also indicates that the approach may be extended to three dimensional situations, which we plan on investigating in future work.

\paragraph{Outline.} The remainder of this work is organized as follows: In Section~\ref{section:the-model-problem} we formulate the model problem and describe its regularity properties in terms of weighted norms and then we introduce the grading mapping;
in Section~\ref{section:the-method} we present the parametric cut finite element method, expressed both in the physics domain and in the reference domain;
in Section~\ref{section:error-est} we prove our error estimates;
in Section~\ref{section:numerics} we present numerical examples using tensor product elements of various order and regularity;
and finally, in Section~\ref{section:conclusions} we give some concluding remarks.

\section{The Model Problem and Regularity Properties} \label{section:the-model-problem}

\subsection{Model Problem}
Let $\Omega$ be a domain in $\IR^2$ with boundary $\partial \Omega$ 
and consider the elliptic problem: find $u:\Omega\rightarrow \IR$ such that 
\begin{alignat}{2} \label{eq:dirichlet-problem-a}
-\Delta u  &= f
&\qquad &\text{in $\Omega$}
\\ \label{eq:dirichlet-problem-b}
u &= g
&\qquad & \text{on $\partial \Omega$}
\end{alignat}
We assume that the boundary $\partial \Omega$ consists of a finite number of smooth 
curve segments $\{\Gamma_i\}$ that meet in corners $\mcC = \{ c_j\}$, some of which are 
nonconvex. Such a domain is illustrated in Figure~\ref{fig:schematic-a}. For brevity let us from now on consider the situation that we have one corner $c$ 
which is nonconvex, as illustrated in Figure~\ref{fig:schematic-b}. Since the singular behavior is local, the extension to several singular corners is straightforward.

\begin{figure}
\centering
\begin{subfigure}[t]{0.45\linewidth}\centering
\includegraphics[width=0.40\linewidth]{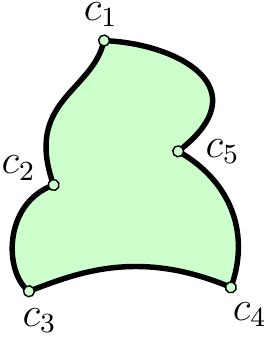}
\subcaption{Domain with multiple corners}
\label{fig:schematic-a}
\end{subfigure}
\begin{subfigure}[t]{0.45\linewidth}\centering
\includegraphics[width=0.5\linewidth]{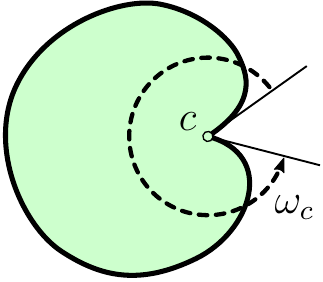}
\subcaption{Domain with a single corner}
\label{fig:schematic-b}
\end{subfigure}
\centering
\caption{Schematic illustrations of domains with corners.
(a) Domain with multiple corners $\{c_j\}$ of which $c_2$ and $c_5$ are nonconvex.
(b) Idealized domain with a single nonconvex corner $c$ with opening angle $\omega_c \in (\pi,2\pi)$.
}
\label{fig:schematic}
\end{figure}

\subsection{The Regularity of the Solution}
The solution $u$ to the above problem with a single nonconvex corner can be decomposed into a regular and singular part
\begin{equation}
u = \ur + \us
\end{equation}
where the regular part fulfills the elliptic shift property
\begin{equation}
\| \ur \|_{H^s(\Omega)} \lesssim \| f \|_{H^{s-2}(\Omega)} + \| g \|_{H^{s-1/2}(\partial\Omega)}
\end{equation}
In the vicinity of the corner $c$ the singular part of the solution takes 
the form
\begin{equation}\label{eq:singular-part}
\us(r,\theta) = r^{\pi/\omega_c} \sin(\theta \pi/\omegac) 
\end{equation}
where the parameter $\omegac \in (\pi,2\pi)$ is the opening 
angle, see Figure~\ref{fig:schematic-b}. Here we use polar coordinates $\{r,\theta\}$ centered at $c$ and with the angular component $\theta$ measured counterclockwise from the tangent line originating in $c$ when passing the boundary in the counterclockwise direction.  Since $1>\pi/\omegac>1/2$ the solution is always in $H^1(\Omega)$ but not in 
$H^2(\Omega)$. More precisely the regularity can be described using weighted norms, see e.g. \cite{MR0226187}.

\begin{rem}[More General Problems]
For brevity we focus on the Dirichlet problem  (\ref{eq:dirichlet-problem-a})--(\ref{eq:dirichlet-problem-b}). In literature more general boundary conditions are studied, for example mixed boundary conditions with both non-homogeneous Neumann and Dirichlet parts that gives singular parts of the solution different from (\ref{eq:singular-part}). Another source than nonconvex corners for inducing singular solutions pertains to jumps in data. However, in most cases the method presented below will still be applicable as the essential assumption is (\ref{eq:assumption}) rather than the complete expression for the singular part of the solution.
\end{rem}

\begin{definition}[Total Derivative Magnitude]\label{def:total-derivative-polar}
The magnitude of the total derivative of order $k$ in polar coordinates is given by
\begin{equation}
|D^k v|^2
=
(\partial_r^k v)^2
+
\sum_{\substack{m+n \leq k, \\ m\geq 0 ,\, n\geq 1}} \left( r^{m-k}\partial_r^m \partial_\theta^n v \right)^2
\end{equation}
\end{definition}

That this definition is reasonable is readily seen by expanding
$(\partial_r + r^{-1}\partial_\theta )^k v$,
evaluating the derivatives of all coefficients of the form $r^{-\ell}$ using the product rule,
and noting that the total derivative magnitude contains all terms in the expansion modulo integer coefficients.

\begin{lem}[Weighted Sobolev Space] The singular part of the solution satisfies, for $k=1,2,\dots$, 
\begin{equation}
\| \us \|^2_{H^{k,\alpha}(\Omega)} 
= 
\sum_{j=1}^k \| r^{\alpha_j} D^j \us \|^2_{\Omega} < \infty
\end{equation}
where the powers $\{\alpha_j\}$ must satisfy the condition
\begin{equation} \label{eq:power-condition}
j-1 - \frac{\pi}{\omegac} < \alpha_j
\end{equation}
\end{lem}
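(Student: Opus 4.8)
The plan is to reduce everything to the homogeneity of $\us$ in the radial variable. Writing $\beta := \pi/\omegac \in (1/2,1)$, so that $\us = r^\beta \sin(\beta\theta)$, a direct computation gives
\[
\partial_r^m \partial_\theta^n \us = c_{m,n}\, r^{\beta - m}\, \varphi_{m,n}(\theta),
\qquad
c_{m,n} = \beta^n \prod_{i=0}^{m-1}(\beta - i),
\]
where $\varphi_{m,n}(\theta) \in \{\pm\sin(\beta\theta),\ \pm\cos(\beta\theta)\}$ is bounded uniformly in $\theta$. Hence $|\partial_r^m\partial_\theta^n \us| \lesssim r^{\beta - m}$ with a constant depending only on $m$, $n$, and $\omegac$.

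Next I would substitute this into Definition~\ref{def:total-derivative-polar}. The pure radial term satisfies $(\partial_r^j\us)^2 \lesssim r^{2(\beta-j)}$, and each mixed term satisfies $\left(r^{m-j}\partial_r^m\partial_\theta^n\us\right)^2 \lesssim r^{2(m-j)}\, r^{2(\beta-m)} = r^{2(\beta-j)}$. Since the sum in the definition contains only $\mathcal{O}(j^2)$ terms, this yields the pointwise bound $|D^j\us|^2 \lesssim r^{2(\beta - j)}$ uniformly in $\theta$, with a constant depending only on $j$ and $\omegac$. The essential observation is that every contribution to the total derivative magnitude of order $j$ carries exactly the homogeneity $r^{\beta-j}$ — this is precisely the effect of the weight $r^{m-k}$ built into Definition~\ref{def:total-derivative-polar}.

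It then remains to check integrability of $r^{2\alpha_j}|D^j\us|^2$ over $\Omega$. I would split $\Omega$ into a sector $S_{r_0} = \{0 < r < r_0\}$ around the corner and the remainder $\Omega\setminus S_{r_0}$; on the latter $r$ is bounded below and $\us$ (or a smooth localization of it) is real-analytic, so the contribution is finite. On $S_{r_0}$, passing to polar coordinates with $dx = r\,dr\,d\theta$,
\[
\int_{S_{r_0}} r^{2\alpha_j}|D^j\us|^2\,dx
\lesssim
\int_0^{r_0}\!\!\int r^{2\alpha_j + 2(\beta - j)}\, r\,d\theta\,dr
=
C\int_0^{r_0} r^{2\alpha_j + 2\beta - 2j + 1}\,dr,
\]
which is finite if and only if $2\alpha_j + 2\beta - 2j + 1 > -1$, i.e. $\alpha_j > j - 1 - \beta = j - 1 - \pi/\omegac$, which is exactly condition (\ref{eq:power-condition}). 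Summing the finite bounds over $j = 1,\dots,k$ gives $\|\us\|_{H^{k,\alpha}(\Omega)}^2 < \infty$.

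I do not expect a genuine obstacle here. The only points needing a little care are (i) checking that the pure radial term $(\partial_r^j\us)^2$ shares the homogeneity $r^{2(\beta-j)}$ of the mixed terms, so that the single estimate $|D^j\us| \lesssim r^{\beta-j}$ truly dominates all of $|D^j\us|$; and (ii) treating $\us$ away from the corner, where the bound is trivial since there $r\sim 1$. The strictness of (\ref{eq:power-condition}) is sharp and is inherited from the leading behavior $r^\beta\sin(\beta\theta)$, whose $j$-th radial derivative is genuinely of size $r^{\beta-j}$ wherever $\sin(\beta\theta)$ does not vanish.
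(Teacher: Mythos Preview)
Your proposal is correct and follows essentially the same approach as the paper: bound each contribution to $|D^j\us|$ by $r^{\beta-j}$ using the explicit form of $\us$, then integrate in polar coordinates to obtain the condition $\alpha_j > j-1-\pi/\omegac$. The paper is slightly terser (it simply integrates over a ball of radius $R$ containing $\Omega$ rather than splitting into a near-corner sector and a remainder), but the argument is otherwise identical.
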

\begin{proof}
Observe that the derivatives appearing in $D^j \us$
%, i.e., the total derivative for the singular part of the solution,
have the bounds
\begin{align} \label{eq:detail-bound-a}
|\partial_r^j \us|
&=
|\partial_r^j r^{(\pi/\omegac)} \sin(\theta \pi/\omegac)| 
\lesssim 
r^{(\pi/\omegac) - j}
\\ \label{eq:detail-bound-b}
|r^{m-j}\partial_r^m \partial_\theta^n \us |
&=
|r^{m-j}\partial_r^m r^{(\pi/\omegac)} \partial_\theta^n \sin(\theta \pi/\omegac) |
\lesssim 
r^{(\pi/\omegac) - j}
\end{align}
where $m,n$ are given in Definition~\ref{def:total-derivative-polar} above.
Let $R$ be the radius of a ball, which contains $\Omega$ and is centered at $c$.
Using \eqref{eq:detail-bound-a}--\eqref{eq:detail-bound-b} we compute the following bound on the weighted norm
\begin{align}
\| r^{\alpha_j} D^j \us \|^2_\Omega
&
\lesssim
\int_0^R r^{2\alpha_j} \left(r^{(\pi/\omegac)-j}\right)^2 r \, dr 
\\
&=\int_0^R r^{2\alpha_j + 2 (\pi/\omegac)  - 2j + 1} \, dr
\\
&= \left[ \frac{r^{2\alpha_j  + 2 (\pi/\omegac) - 2(j-1)}}{  2 \alpha_j  + 2 (\pi/\omegac) - 2(j-1) )} \right]_0^R 
\end{align}
and we note that this bound is finite for 
\begin{equation}
 (j-1) - (\pi/\omegac) <  \alpha_j
\end{equation}
which is precisely the condition \eqref{eq:power-condition}.
\end{proof}

\subsection{Weighted Sobolov Spaces and Parametric Mapping}
We will now construct a mapping from a reference domain to $\Omega$ such that the pullback of the singular part of the solution $\us$ has full regularity in terms of the reference coordinates.

\paragraph{The Parametric Mapping.}
For convenience we here work with polar coordinates, both for the physical coordinates and for the reference coordinates.
Consider the bijective mapping $F_\gamma: \IR^2 \rightarrow \IR^2$ 
defined by  
\begin{equation} \label{eq:mapping}
F_\gamma: \left[ \begin{matrix}
\hat r \\ \hat \theta
\end{matrix}  \right] 
\mapsto  \left[ \begin{matrix}
{\hat r}^\gamma \\ {\hat \theta}
\end{matrix}
\right] = \left[ 
\begin{matrix} 
r \\ \theta 
\end{matrix}
\right] 
\end{equation}
for $\gamma > 0$, and define the domain in reference coordinates
\begin{equation}
\hatOmega = F_\gamma^{-1} (\Omega)
\end{equation}
For $v:\Omega\to\IR$ we define the pullback
\begin{equation}
v(r) = \hat v(\hat r) : \hatOmega \to \IR
\end{equation}
where we for notational brevity in the calculations below only include the radial coordinate as the angular coordinate is unchanged in the mapping \eqref{eq:mapping}.

\begin{thm}[Equivalence of Norms] (1) For $\gamma>0$, it holds
\begin{equation}\label{eq:norm-eq-H1}
\| \nabla u \|_{\Omega} \sim \| \hatnabla \hat u \|_{\hatOmega}
\end{equation}
where the hidden constants depend only on $\gamma$.
\vspace{1ex}
\\ \noindent
(2)  For $k=1,2,\dots$, and $\gamma>0$, it holds
\begin{equation}\label{eq:norm-eq-Hkalpha}
\| \hatD^k \hatu \|_{\hatOmega}^2
\lesssim
\sum_{j=1}^k
\| r^{\alpha_j} D^j u \|_{\Omega}^2
\end{equation}
where $\hatD^k\hatu$ is the total derivative of order $k$ in the reference domain and
\begin{equation} \label{eq:alphaj-alphak}
\alpha_j = (j - k) + \alpha_k
\qquad\text{with}\qquad
\alpha_k = (k-1)\frac{\gamma-1}{\gamma}
\end{equation}

\end{thm}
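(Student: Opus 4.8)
The plan is to reduce everything to the one--dimensional substitution $r=\hat r^\gamma$ in the radial direction, since the angular coordinate is untouched by $F_\gamma$. First I would record the chain--rule identity: writing $u(r)=\hatu(\hat r)$ with $r=\hat r^\gamma$, one has $\partial_{\hat r}=\gamma\hat r^{\gamma-1}\partial_r$, and by induction on $m$
\begin{equation}
\partial_{\hat r}^m\partial_{\hat\theta}^n\hatu
=\sum_{p=1}^{m} c_{m,p}\,\hat r^{\,p\gamma-m}\,\partial_r^p\partial_\theta^n u,\qquad m\ge 1,
\end{equation}
with constants $c_{m,p}$ depending only on $\gamma$ (and $\partial_{\hat\theta}^n\hatu=\partial_\theta^n u$ when $m=0$), since $\partial_{\hat\theta}=\partial_\theta$ commutes with the radial operations. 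The inductive step produces only the two adjacent exponents $p\gamma-(m+1)$ and $(p+1)\gamma-(m+1)$, so the stated form is preserved.

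For part (1) I would insert this with $m=1$ into $|\hatnabla\hatu|^2=(\partial_{\hat r}\hatu)^2+\hat r^{-2}(\partial_{\hat\theta}\hatu)^2$, integrate against $d\hat x=\hat r\,d\hat r\,d\hat\theta$, and change variables $\hat r\mapsto r$ using $d\hat r=\tfrac1\gamma r^{1/\gamma-1}\,dr$. A short computation gives $\|\hatnabla\hatu\|_{\hatOmega}^2=\int\bigl(\gamma^2\hat r^{2\gamma-1}(\partial_r u)^2+\hat r^{-1}(\partial_\theta u)^2\bigr)\,d\hat r\,d\hat\theta$ while $\|\nabla u\|_\Omega^2=\int\bigl(\gamma\hat r^{2\gamma-1}(\partial_r u)^2+\gamma\hat r^{-1}(\partial_\theta u)^2\bigr)\,d\hat r\,d\hat\theta$; the two coefficient pairs differ only by the factors $\gamma$ and $1/\gamma$, which yields \eqref{eq:norm-eq-H1} with constants $\min(\gamma,1/\gamma)$ and $\max(\gamma,1/\gamma)$.

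For part (2) I would expand $|\hatD^k\hatu|^2$ according to Definition~\ref{def:total-derivative-polar} in the hatted variables, insert the chain--rule identity into each term, use $(\sum_p a_p)^2\lesssim\sum_p a_p^2$ (finitely many terms, constant depending on $k$), integrate against $\hat r\,d\hat r\,d\hat\theta$, and change variables $\hat r\mapsto r$. A term $\hat r^{\,m-k}\partial_{\hat r}^m\partial_{\hat\theta}^n\hatu$ with $m\ge 1$ produces, after substitution, integrals $\int\hat r^{\,2p\gamma-2k+1}(\partial_r^p\partial_\theta^n u)^2\,d\hat r\,d\hat\theta$ for $1\le p\le m$, and the key identity
\begin{equation}
\frac{2p\gamma-2k+1}{\gamma}+\frac1\gamma-1=2\alpha_p+1,\qquad \alpha_p=(p-1)-\frac{k-1}{\gamma},
\end{equation}
turns each into $\tfrac1\gamma\int r^{\,2\alpha_p+1}(\partial_r^p\partial_\theta^n u)^2\,dr\,d\theta$; the cases $m=0$ and the pure radial term $(\partial_{\hat r}^k\hatu)^2$ are handled the same way. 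It then remains to recognize the right--hand side: since $\alpha_j=(j-k)+\alpha_k$ is affine in $j$ one has $\alpha_{p+n}-n=\alpha_p$, hence $r^{\,2\alpha_p+1}(\partial_r^p\partial_\theta^n u)^2=r^{\,2\alpha_{p+n}-2n+1}(\partial_r^p\partial_\theta^n u)^2$, which is precisely the contribution of the term $r^{\,p-(p+n)}\partial_r^p\partial_\theta^n u$ occurring in $|D^{p+n}u|^2$, weighted by $r^{\alpha_{p+n}}$ and integrated against $r\,dr\,d\theta$. Since $p+n\le m+n\le k$, the index $j=p+n$ is an admissible order, and summing the finitely many contributions gives \eqref{eq:norm-eq-Hkalpha}.

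The step I expect to demand the most care is this final matching: one must check that every monomial $\partial_r^p\partial_\theta^n u$ appearing on the reference side occurs, with exactly the correct power of $r$, among the listed terms of some $|D^j u|^2$ with $j\le k$ --- and this is what dictates the particular exponents $\alpha_j$ in \eqref{eq:alphaj-alphak}. The two facts that make it work are $p\le m$ together with $m+n\le k$, so that $j=p+n\le k$ is admissible, and the affine dependence of $\alpha_j$ on $j$, so that the shift by $n$ between the reference and physical weights cancels exactly; the bookkeeping for the $m=0$ terms and for the pure radial term is slightly different but strictly easier, involving no angular derivative (respectively only a pure radial derivative).
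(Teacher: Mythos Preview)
Your argument is correct and follows essentially the same route as the paper: expand $|\hatD^k\hatu|^2$ via Definition~\ref{def:total-derivative-polar}, apply the chain rule to express $\hat\partial_{\hat r}^m$ as a sum of $\partial_r^p$ with explicit $\hat r$-powers, square and integrate, change variables $\hat r\mapsto r$, and identify the resulting $r$-powers with the weights $\alpha_j$. The only difference is organizational bookkeeping in the final matching --- the paper reshuffles the double sum in the mixed terms so that all of Term~II is absorbed into the single weight $\|r^{\alpha_k}D^k u\|_\Omega^2$, whereas you distribute the mixed contributions among the various $\|r^{\alpha_j}D^j u\|_\Omega^2$ via the affine identity $\alpha_{p+n}-n=\alpha_p$; both are valid and yield the same bound.
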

\begin{proof} {\bf The First Order Case (\ref{eq:norm-eq-H1}).}  Consider the mapping 
\begin{equation}
r = \hatr^\gamma , \qquad \hatr = r^{1/\gamma}
\end{equation}
with derivatives
\begin{equation}
\frac{dr}{d\hatr} = \gamma \hatr^{\gamma - 1} = \gamma r^{(\gamma  -1 )/\gamma},
\qquad
\frac{d\hatr}{d r} = \gamma^{-1} r^{(1-\gamma)/\gamma} = \gamma^{-1} \hatr^{(1-\gamma)}
\end{equation}
and measures 
\begin{equation}
r dr d \theta 
=  \gamma \hatr^{2(\gamma - 1)}  \hatr d\hatr d\hattheta, 
\qquad 
\hatr d \hatr d \hattheta  =  \gamma^{-1} r^{(1-\gamma)/\gamma}  dr d \theta
\end{equation}
Using that $u(r) =\hatu (\hatr )$ we by the chain rule have
\begin{equation}
\frac{du}{dr} = \frac{d\hatu}{d\hatr}\frac{d\hatr}{dr} = \frac{d\hatu}{d\hatr} \gamma^{-1} \hatr^{(1-\gamma)}
\end{equation}
Taking the norm of the gradient and changing coordinates we find the equivalence
\begin{align}
\|\nabla u \|^2_{\Omega} 
&=\int_\Omega \Big(|\partial_r u |^2 + r^{-2} |\partial_\theta u |^2 \Big)   r \, dr d\theta
\\
&=\int_{\hatOmega}
\Big( |\hatpartial_{\hatr} \hatu  |^2  ( \gamma^{-1}  \hatr^{(1-\gamma)} )^2 
+ 
\hatr^{-2\gamma} |\hatpartial_{\hattheta} \hatu |^2 \Big)
\gamma \hatr^{2(\gamma - 1)} 
  \hatr \, d\hatr d\hattheta
\\
&=\int_{\hatOmega}
\Big( \gamma^{-1} |\hatpartial_{\hatr} \hatu  |^2  + \gamma \hatr^{-2} |\hatpartial_{\hattheta} \hatu |^2 \Big) \hatr \, d\hatr d\hattheta
\\
&\sim\int_{\hatOmega}
\Big( |\hatpartial_{\hatr} \hatu  |^2  +  \hatr^{-2} |\hatpartial_{\hattheta} \hatu |^2 \Big) \hatr \, d\hatr d\hattheta
\\
&=\|\hatnabla \hatu \|^2_{\hatOmega}
\end{align}
with constants only dependent on $\gamma$ and $\gamma^{-1}$.

\paragraph{The General Case (\ref{eq:norm-eq-Hkalpha}).}
By Definition~\ref{def:total-derivative-polar} the total derivative magnitude is
\begin{equation} \label{eq:terms-of-that-form}
|\hatD^k \hatv |^2
=
\underbrace{
(\hatpartial_{\hatr}^k \hatv )^2
}_{I}
+
\overbrace{
\sum_{\substack{ m\geq 0 ,\, n\geq 1 \\ k \geq m + n }}
\left(
\hatr^{m-k} \hatpartial_{\hatr}^m \hatpartial_{\hattheta}^n \hatv
\right)^2
}^{II}
\end{equation}
\paragraph{Term $\bm{I}$.}
Using the chain rule we get the following expression with derivatives in terms of physical coordinates instead of reference coordinates 
\begin{equation}
\hatpartial_{\hatr}^k \hatv
\sim
\sum_{j=1}^k \sum_{\{q_\ell\}\in \mcQ_j^k}
%\left(
\partial_r^j v \cdot
\prod_{\ell=1}^j 
\hatpartial_{\hatr}^{q_\ell} r
%\right)
\end{equation} 
where $\mcQ_j^k$ consists of all sets $\{q_\ell\}_{\ell=1}^j$ of positive integers such that $\sum_{\ell=1}^j q_\ell=k$.
Taking $q_\ell$ derivatives of $r$ with respect to $\hatr$ we get
\begin{equation} \label{eq:inner-der}
\hatpartial_{\hatr}^{q_\ell} r
=
\left( \prod_{i=1}^{q_\ell} (\gamma - (i-1)) \right) \hatr^{\gamma-q_\ell}
\sim
r^{(\gamma-q_\ell)/\gamma}
\end{equation}
with a constant depending only on $\gamma$.
This gives the equivalence
\begin{align}
\sum_{\{q_\ell\}\in \mcQ_j^k}
\partial_r^j v \cdot
\prod_{\ell=1}^j 
\hatpartial_{\hatr}^{q_\ell} r
&\sim
\sum_{\{q_\ell\}\in \mcQ_j^k}
\partial_r^j v \cdot
\prod_{\ell=1}^j
r^{(\gamma-q_\ell)/\gamma}
\\&
\sim
\sum_{\{q_\ell\}\in \mcQ_j^k}
\partial_r^j v \cdot r^{j - k/\gamma}
%\\&
=
\left|\mcQ_j^k \right|
\partial_r^j v \cdot r^{j - k/\gamma}
\end{align}
where we in the second equivalence used that $\sum_{\ell=1}^j q_\ell=k$.
In summary, we have the equivalence
\begin{equation}
\label{eq:leading-term-equiv-a}
\hatpartial_{\hatr}^k \hatv
\sim
\sum_{j=1}^k
\partial_r^j  v \cdot
r^{j - k/\gamma}
\end{equation}
For the measure we have
\begin{equation} \label{eq:measure-trans}
\hatr d\hatr d\hattheta
= r^{1/\gamma} \frac{d\hatr }{ dr } dr d\theta
=  r^{(1/\gamma - 1)} \left( \gamma r^{(\gamma - 1)/\gamma} \right)^{-1} r dr d\theta
= \gamma^{-1} \left( r^{(\gamma - 1)/\gamma}\right)^{-2} r dr d\theta
\end{equation}
Integrating Term~$I$ over the reference domain, using the equivalence \eqref{eq:leading-term-equiv-a}, and the Cauchy--Schwarz inequality we get
\begin{align} \label{eq:fin-leading-a}
\int_{\hatOmega}
\bigl(\hatpartial_{\hatr}^k \hatv \bigr)^2 \hatr \, d\hatr d\hattheta
& \lesssim
\sum_{j=1}^k
\int_\Omega
\underbrace{
\left(\partial_r^j  v \right)^2
}_{\leq |D^j v|^2}
\left(r^{j - k/\gamma}\right)^2
\left( r^{(\gamma - 1)/\gamma}\right)^{-2}
r \, dr d\theta
\\& \label{eq:fin-leading-b}
\lesssim
\sum_{j=1}^k
\left\| r^{j-k/\gamma-(\gamma-1)/\gamma} D^j v \right\|_\Omega^2
\end{align}
where we identify
\begin{equation}\label{eq:aj-ak}
\alpha_j = j-\frac{k}{\gamma}-\frac{\gamma-1}{\gamma}
= (j - k) + \alpha_k
\qquad\text{where}\qquad
\alpha_k = (k-1)\frac{\gamma-1}{\gamma}
\end{equation}

\paragraph{Term $\bm{II}$.}
%In the same manner, we now proceed with the remaining terms in \eqref{eq:terms-of-that-form}, i.e., we consider terms on the form $\hatr^{m-k} \hatpartial_{\hatr}^m \hatpartial_{\hattheta}^n \hatv$ where $0 \leq m \leq k-1$ and $1 \leq n \leq k-m$.
Again using the chain rule we get the following expression with derivatives in terms of physical coordinates instead of reference coordinates
\begin{equation}
\hatpartial_{\hatr}^m \hatpartial_{\hattheta}^n \hatv
\sim
\sum_{j=0}^m \sum_{\{q_\ell\}\in \mcQ_j^m}
%\left(
\partial_r^j \partial_\theta^n v \cdot
\prod_{\ell=1}^j 
\hatpartial_{\hatr}^{q_\ell} r
%\right)
\end{equation}
where we recall that by definition $\prod_{\ell=1}^0 
\hatpartial_{\hatr}^{q_\ell} r = 1$.
By \eqref{eq:inner-der} we have the equivalence
\begin{align}
\sum_{\{q_\ell\}\in \mcQ_j^m}
\partial_r^j \partial_\theta^n u \cdot
\prod_{\ell=1}^j 
\hatpartial_{\hatr}^{q_\ell} r
&\sim
\sum_{\{q_\ell\}\in \mcQ_j^m}
\partial_r^j \partial_\theta^n v \cdot
\prod_{\ell=1}^j
r^{(\gamma-q_\ell)/\gamma}
\\&
\sim
\sum_{\{q_\ell\}\in \mcQ_j^m}
\partial_r^j \partial_\theta^n v \cdot r^{j - m/\gamma}
%\\&
=
\left|\mcQ_j^m \right|
\partial_r^j \partial_\theta^n v \cdot r^{j - m/\gamma}
\end{align}
where we in the second equivalence used that $\sum_{\ell=1}^j q_\ell=m$.
In summary, we have the equivalence
\begin{align}
\label{eq:single-term-equiv-a}
\hatr^{m-k} \hatpartial_{\hatr}^m \hatpartial_{\hattheta}^n \hatv
&\sim
r^{(m-k)/\gamma}
\sum_{j=0}^m
\partial_r^j \partial_\theta^n v \cdot
r^{j - m/\gamma}
\\
\label{eq:single-term-equiv-b}
&
\sim
\sum_{j=0}^m
\partial_r^j \partial_\theta^n v \cdot r^{j - k/\gamma}
\\
\label{eq:single-term-equiv-c}
&
=
\sum_{j=0}^m
r^{j - k} \partial_r^j \partial_\theta^n v \cdot r^{k(\gamma-1)/\gamma}
\end{align}
Integrating Term~$II$ over the reference domain gives
\begin{align}
\label{eq:gen-case-a}
&
\int_{\hatOmega}
\sum_{\substack{ m\geq 0 ,\, n\geq 1 \\ k \geq m + n }}
\bigl(
\hatr^{m-k} \hatpartial_{\hatr}^m \hatpartial_{\hattheta}^n \hatv
\bigr)^2 \hatr \, d\hatr d\hattheta
\\
\label{eq:gen-case-b}
&\qquad\quad
\lesssim
\int_{\hatOmega}
\sum_{\substack{ m\geq 0 ,\, n\geq 1 \\ k \geq m + n }}
\sum_{j=0}^m
\bigl( 
r^{j - k} \partial_r^j \partial_\theta^n v \cdot r^{k(\gamma-1)/\gamma}
\bigr)^2 \hatr \, d\hatr d\hattheta
\\
\label{eq:gen-case-c}
&\qquad\quad
\sim
\sum_{\substack{ m\geq 0 ,\, n\geq 1 \\ k \geq m + n }}
\int_{\hatOmega}
\bigl( 
r^{m - k} \partial_r^m \partial_\theta^n v \cdot r^{k(\gamma-1)/\gamma}
\bigr)^2 \hatr \, d\hatr d\hattheta
\\
%\label{eq:gen-case-d}
%&\sim
%\sum_{\{m,n\}\in \mcI_k}
%\int_\Omega
%\bigl| 
%r^{m - k} \partial_r^m \partial_\theta^n v \cdot r^{k(\gamma-1)/\gamma}
%\bigr|^2
%\left( r^{(\gamma - 1)/\gamma}\right)^{-2} r \, dr d\theta
%\\
\label{eq:gen-case-e}
&\qquad\quad
\sim
\int_\Omega
\underbrace{
\sum_{\substack{ m\geq 0 ,\, n\geq 1 \\ k \geq m + n }}
\bigl( 
r^{m - k} \partial_r^m \partial_\theta^n v \bigr)^2
}_{\leq |D^k v|^2}
%\left( r^{(k-1)(\gamma-1)/\gamma} \right)^2
\left( r^{k(\gamma-1)/\gamma} \right)^2
\left( r^{(\gamma - 1)/\gamma}\right)^{-2}
r \, dr d\theta
\\
\label{eq:gen-case-f}
&\qquad\quad
\leq
\| r^{\alpha_k} D^k v \|_\Omega^2
\end{align}
with $\alpha_k$ as in \eqref{eq:aj-ak}.
Here we in \eqref{eq:gen-case-b} used the equivalence \eqref{eq:single-term-equiv-a}--\eqref{eq:single-term-equiv-c} and the Cauchy-Schwarz inequality to move the sum over $j$ outside the square; in \eqref{eq:gen-case-c} we used the fact that all index pairs $\{j,n\}$ generated by the inner sum over $j$ are also generated by the outer sum as index pairs $\{m,n\}$; and in \eqref{eq:gen-case-e} we used the transformation of the measure \eqref{eq:measure-trans}.

\paragraph{Conclusion.}
From \eqref{eq:fin-leading-a}--\eqref{eq:fin-leading-b} and \eqref{eq:gen-case-a}--\eqref{eq:gen-case-f} we have
\begin{align}
\| \hatD^k \hatv \|^2_{\hatOmega} 
&=
%\int_{\hatOmega}
%(\hatD^k \hatv )^2
%\hatr \, d\hatr d\hattheta
%\\&=
\int_{\hatOmega}
(\hatpartial_{\hatr}^k \hatv )^2
\hatr \, d\hatr d\hattheta
+
\int_{\hatOmega}
\sum_{\substack{ m\geq 0 ,\, n\geq 1 \\ k \geq m + n }}
\left(
\hatr^{m-k} \hatpartial_{\hatr}^m \hatpartial_{\hattheta}^n \hatv
\right)^2
\hatr \, d\hatr d\hattheta
%\\&
\lesssim
\sum_{j=1}^k \left\| r^{\alpha_j} D^j v \right\|^2_\Omega
\label{eq:conclusion}
\end{align}
with $\alpha_j$ given by \eqref{eq:aj-ak}. %This concludes the proof.
\end{proof}

\begin{rem}[The Scaling Parameter $\boldsymbol\gamma$]\label{remark:gamma-cond}
Combining the condition \eqref{eq:power-condition} for $\us$ to be in the weighted Sobolev space  with the expression for $\alpha_j$ \eqref{eq:alphaj-alphak} gives the inequality
\begin{equation}
j-1 - \frac{\pi}{\omegac} < \alpha_j
= j-k + \alpha_k
= j-1 - \frac{k - 1}{\gamma}
\quad\Rightarrow\quad
\gamma > \frac{(k-1) \omega_c}{\pi}
\end{equation}
Choosing the scaling parameter $\gamma$ according to this inequality means that the pullback of the solution $\hatu$, including the singular part, is in $H^k(\hatOmega)$.
\end{rem}

\section{The Parametric Cut Finite Element Method} \label{section:the-method}
We will now construct a parametric cut finite element method based on the map 
$F_\gamma$. Since the derivative of the mapping is zero in the origin for $\gamma>1$ 
we define the mesh parameters in the Nitsche forms using the well defined mesh 
parameter in the reference domain where we use a quasi-uniform mesh.

\subsection{The Mesh and Finite Element Space} \label{section:mesh-and-fem-space}

\begin{itemize}
\item Let $F: \hatOmega \rightarrow \Omega$ with $\hatOmega \subset [-1,1]^2 \subset \IR^2$ 
where $\hatOmega$ is the reference domain such that the center $(0,0) \in \partial\hatOmega$ corresponds to the singular point. For simplicity we here assume $F=F_\gamma$ but in general the mapping $F$ is a composite mapping $F=F_* \circ F_\gamma$ where the additional bijective mapping $F_*$ is readily incorporated in the method below using standard techniques.

\item Let $\hatmcK_{h,0}$ be a quasi-uniform mesh on $[-1,1]^2$ consisting of shape regular elements with mesh parameter $h$. Typically we employ uniform quadrilaterals. Let $\hatmcK_h = \{ K \in \hatmcK_{h,0} : K \cap \Omega \neq \emptyset \}$ be the active mesh in the 
reference domain.
Let $\mcK_h = F(\hatmcK_h)$ be the induced active mesh in the physical domain. 

\item Let 
$\widehat{\mcF}_h$ be the set of interior faces in $\hatmcK_h$ such that each face in $\widehat{\mcF}_h$ is adjacent to at least one element $K \in\hatmcK_h$ that is cut by the boundary $\partial\hatOmega$, i.e., $\overline{K}\cap\partial\hatOmega \neq \emptyset$.

\item Let $\widehat{V}_{h,0}$ be a finite element space consisting of continuous piecewise 
polynomials of order $p$ defined on $\hatmcK_{h,0}$ and let 
$\widehat{V}_h = \widehat{V}_{h,0} |_{\hatmcK_h}$ be the active finite element space in the 
reference domain. The active finite element space in the physical domain is given by $V_h = \hatV_h \circ F^{-1}$.

\item It is noted in the analysis below, see Remark~\ref{remark:choice-of-gamma}, that by choosing $\gamma = 2 p$ in the mapping we obtain optimal order convergence for our method regardless of the opening angle $\omega_c \in (\pi,2\pi)$.
\end{itemize}

\subsection{The Method}

\paragraph{The Method in Physical Cartesian Coordinates.}

Find $u_h \in V_h$ such that 
\begin{equation} \label{eq:method-cartesian}
A_h(u_h,v) = l_h(v) \,,\qquad \forall v \in V_h
\end{equation}
where 
\begin{align}
A_h(v,w) &= a_h(v,w) + s_h(\pi_h v,\pi_h w)
\\
a_h(v,w) &= (\nabla v,\nabla w)_{\Omega} 
-(n\cdot \nabla v,w)_{\partial \Omega}
+(v, \beta h_\Omega^{-1} w - n\cdot \nabla w)_{\partial \Omega}
\\ \label{eq:sh}
s_h(v,w) &= \hats_h(\hatv,\hatw)
=\sum_{j=1}^p \tau {h}^{2j - 1} ([\hatD^j \hatv],[\hatD^j \hatw])_{\widehat{\mcF}_h}
\\
l_h(v) &= (f,v)_{\Omega} + (g,\beta h_\Omega^{-1} v - n\cdot \nabla v )_{\partial \Omega}
\end{align}
Here $[\cdot]$ denotes the jump between neighboring elements; $\pi_h$ is a projection operator onto $V_h$, which we define in the analysis below;
and $h_\Omega \sim h \hatr^{\gamma-1}  = h r^{\frac{\gamma-1}{\gamma}}$ is a mesh function defined in such a way that 
\begin{equation}
\hata_{h} (\hatv,\hatw) = a_h(v,w), \qquad \hatl_{h}(\hatv) = l_h (v)
\end{equation}
where the reference forms $\hata_{h}$ and $\hatl_{h}$ are given below.
Note that since $\pi_h$ is a projection, i.e., $\pi_h v = v$ for $v\in V_h$, the method \eqref{eq:method-cartesian} is unaffected by its presence.
The stability form $s_h$ is naturally defined in the reference domain since we, in the analysis below, prove coercivity in the reference domain. It is furthermore easier to evaluate higher-order derivatives in the reference domain.

\paragraph{Galerkin Orthogonality.} Due to the inclusion of the projection operator $\pi_h$ the method is not consistent yielding the perturbed Galerkin orthogonality 
\begin{equation}\label{eq:gal-ort}
A_h(u-u_h,v) - s_h(\pi_h u,v) = 0 \,,\qquad \forall v \in V_h
\end{equation}
\begin{rem}[Inconsistency]
This inconsistency is artificial in the sense that for sufficiently regular $u$ the interpolation operator $\pi_h$ is not needed and $s_h(u,v)=0$. However, we include $\pi_h$ in $s_h$ so that the form $A_h$ is well defined also for exact solutions of lower regularity, which is the case of the solution to the dual problem in the proof of the $L^2$ estimate below.
\end{rem}

\paragraph{The Method in Reference Cartesian Coordinates.} 
Transforming back to Euclidian coordinates  $(\hatx,\haty) = (\hatr \cos \hattheta,\hatr \sin \hattheta )$ 
in the reference domain we obtain the method
\begin{equation} \label{eq:method-ref}
\hatA_{h}(\hatu_{h},\hatv) = \hatl_{h}(\hatv) \,,\qquad \forall \hatv\in\hatV_h
\end{equation}
where $\hatA_{h}(\hatv,\hatw) = \hata_{h}(\hatv,\hatw) + \hats_{h}(\hatv,\hatw)$. The reference forms are defined by \eqref{eq:sh} and
\begin{align} \label{eq:ref-forms-a}
\hata_h(\hatv,\hatw) 
&= \left( \hatnabla \hatv, B \hatnabla \hatw \right)_{\hatOmega} 
- \left( \hatn \cdot B \hatnabla \hatv, \hatw \right)_{\partial \hatOmega}
+ \left( \hatv, \beta h^{-1} - \hatn \cdot B \hatnabla \hatw \right)_{\partial \hatOmega}
\\ \label{eq:ref-forms-b}
\hatl_{h}(\hatv) &= \left(\gamma\left(\hatx^2 + \haty^2 \right)^{\gamma-1}\hatf,\hatv \right)_{\hatOmega} 
 + \left( \hatg,\beta {h}^{-1} \hatv - \hatn \cdot B\hatnabla 
\hatv \right)_{\partial \hatOmega}
\end{align}
where $\hatn$ is the outward pointing unit normal to $\partial\hatOmega$ in Cartesian reference coordinates and
we employed the notation
\begin{equation}
B = S_{\hattheta}^T D_\gamma S_{\hattheta}
\qquad\text{with}\qquad
D_\gamma 
= \left(
\begin{matrix}
\gamma^{-1} &  0
\\
0 & \gamma 
\end{matrix}
\right)
,\
S_{\hattheta}
=
\begin{pmatrix}
 \cos \hattheta &  \sin \hattheta
\\
-\sin \hattheta & \cos \hattheta
\end{pmatrix}
\end{equation}
See Appendix~\ref{appendix:riemann} for details of the derivation. We note that $B$ is symmetric and positive definite
\begin{equation}\label{eq:B-posdef}
\| \xi \|_{\IR^2}^2 \leq c_\gamma (B\xi,\xi)_{\IR^2}
\,,\qquad \forall \xi \in \IR^2
\end{equation} 
with $c_\gamma = \min(\gamma,\gamma^{-1})$,
and uniformly bounded
\begin{equation}\label{eq:B-cont}
(B\xi,\eta)_{\IR^2} \leq C_\gamma \| \xi \|_{\IR^2} \| \eta \|_{\IR^2} \,,\qquad \forall \xi,\eta \in \IR^2
\end{equation}
with $C_\gamma = \max(\gamma,\gamma^{-1})$.
Furthermore, since $S_{\hattheta}$ is a rotation matrix the identity $S_{\hattheta} S_{\hattheta}^T = I$ holds and also $B^m$ is uniformly bounded
\begin{equation}\label{eq:B-cont-n}
(B^m\xi,\eta)_{\IR^2} \leq C_\gamma^m \| \xi \|_{\IR^2} \| \eta \|_{\IR^2} \,,\qquad \forall \xi,\eta \in \IR^2
\,,\
m=1,2,\dots 
\end{equation}

\begin{rem}[Numerical Stability]
By implementing the method in reference coordinates according to \eqref{eq:method-ref} we conveniently avoid numerical issues induced by the mapping $F_\gamma$ having unbounded derivatives when approaching the singular point.
Even though the forms in the physical respectively reference domains are mathematically identical, evaluation of the forms in the physical domain involves products between terms tending to infinity and to zero when approaching the singular point, causing numerical accuracy and stability issues. Thanks to simplification, this issue does not exist in the reference domain forms \eqref{eq:ref-forms-a} and \eqref{eq:ref-forms-b}.
\end{rem}

\subsection{The Method with Multiple Nonconvex Corners} \label{section:multipatch}
We consider situations where the domain $\Omega$ is partitioned into 
a finite set of patches $\{\Omega_i\}_{i=0}^n$ in such a way that each singular corner belongs to one patch as illustrated in Figure~\ref{fig:multipatch}. The scaling 
can then be applied locally to each patch $\Omega_i$ containing a singular corner. The coupling between the patches 
is done weakly using Nitsche's method which does not require the meshes to match across the 
interface, see the parametric multipatch method in \cite{JoLaLa2017}. This approach enables us to use the grading locally in the vicinity of each corner without difficulty.

\paragraph{Multipatch Meshes and Finite Element Spaces.} 
For each patch $\Omega_i$ we construct a mapping $F^{(i)}$, a mesh $\mcK_h^{(i)}$ and a finite element space $V_h^{(i)}$ as outlined in Section~\ref{section:mesh-and-fem-space}. On the complete multipatch domain $\Omega$ we define the finite element space
\begin{equation}
V_h = \bigoplus_{i=0}^n V_h^{(i)}
\end{equation}
where we note that $V_h$ is discontinuous across patch interfaces. This will instead be enforced weakly in the method.

\paragraph{The Multipatch Method in Physical Cartesian Coordinates.} Defining the domain $\Omega = \bigcup_{i=0}^n \Omega_i$
and the patch interface $\Gamma = \bigcup_{i<j} \overline{\Omega}_i \cap \overline{\Omega}_j$ we adapt formulation \eqref{eq:method-cartesian} to the multi-patch setting by redefining $a_h$ and $s_h$ as
\begin{align}
\label{eq:interface-ah}
a_{h}(v,w) &= (\nabla v,\nabla w)_{\Omega \setminus \Gamma} 
-(n\cdot \nabla v,w)_{\partial\Omega}
+(v, \beta h_\Omega^{-1}w  - n\cdot \nabla w )_{\partial\Omega}
\\ \nonumber
&\qquad 
-(\langle n\cdot \nabla v \rangle , [w])_{\Gamma}
+([v], \beta \langle h_\Omega^{-1} \rangle [w] - \langle n\cdot \nabla w \rangle )_{\Gamma}
\\
\label{eq:interface-sh}
s_{h}(v,w) & = \sum_{i=0}^n \sum_{j=1}^k \tau {h}^{2j - 1} ([\hatD^j \hatv],[\hatD^j \hatw])_{\widehat{\mcF}_{i,h}}
\end{align}
In \eqref{eq:interface-ah} we have added Nitsche interface terms that couple the patches and \eqref{eq:interface-sh} now includes Ghost penalty stabilization on the computational mesh of each patch. In the interface terms $\langle\cdot\rangle$ denotes the average between adjoining patches while $[\cdot]$ denotes the jump.
Expressing the interface terms in reference Cartesian coordinates is straightforward and we refer to \cite{JoLaLa2017} for further details.
  
\begin{figure}
\centering
\begin{subfigure}[t]{0.45\linewidth}\centering
\includegraphics[height=0.42\linewidth]{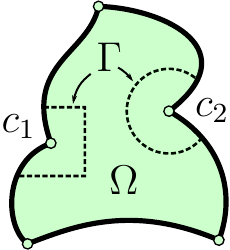}
\subcaption{Domain with two nonconvex corners}
\label{fig:multipatch-a}
\end{subfigure}
\begin{subfigure}[t]{0.45\linewidth}\centering
\includegraphics[height=0.42\linewidth]{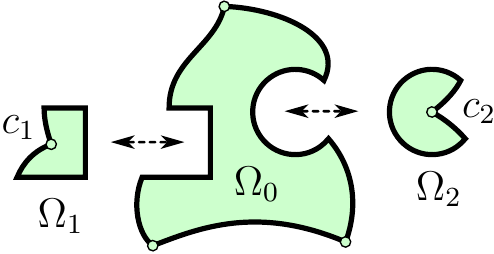}
\subcaption{Partition of domain}
\label{fig:multipatch-b}
\end{subfigure}
\centering
\caption{Partition of a domain with two nonconvex corners into patches, $\Omega = \cup_{i=0}^2 \Omega_i$, such that no patch contains more than one of the original nonconvex corners. Note that nonconvex corners on the patch interface induced by the partition do not give rise to new singularities.
}
\label{fig:multipatch}
\end{figure}

\section{A Priori Error Estimates} \label{section:error-est}
\paragraph{Main Approach.}
\begin{itemize}
\item We start by defining an energy norm associated with the form $\hatA_{h}$ in the 
reference domain and we establish coercivity and continuity using standard arguments 
since the mesh is quasi-uniform and $B$ is positive definite and bounded, see (\ref{eq:B-posdef}) 
and (\ref{eq:B-cont}).
\item Next we define a corresponding energy norm in the physical domain using the mapping which 
leads to the corresponding coercivity and continuity results for the physical problem.
\item We define an interpolation operator in the reference domain and recall a standard interpolation error 
estimate. This then leads to an interpolation error estimate in the physical domain using 
(\ref{eq:norm-eq-Hkalpha}). 
\item Using the above results optimal order a priori results follows using standard techniques.
\end{itemize}
\subsection{Basic Results}

\paragraph{Extension Operator.}
Let $U(\Omega)\subset \IR^2$ be a neighborhood of $\Omega$ such that 
$\mcK_h \subset U(\Omega)$ for all $h\in(0,h_0]$. Due to the fact that $\omegac 
< 2\pi$ there is an extension operator $\widehat{E}: H^s (\hatOmega) 
\rightarrow  H^s(\widehat {U(\Omega)} )$ that satisfies the stability estimate 
\begin{equation}
\| \widehat{E}(\hatv) \|^2_{H^s(\widehat{U(\Omega)})} 
\lesssim
\| \hatv \|_{H^s(\hatOmega)}
\end{equation}

\paragraph{Interpolant.}
Let 
$\hatpi_{h} : H^1(\widehat{U(\Omega)}) \rightarrow \widehat{V}_h$ be a Scott--Zhang 
type interpolation operator and recall that we have the error estimate
\begin{equation} \label{eq:Hm-interpolation}
\| \hatv - \hatpi_{h} \hatv \|_{H^m(\hatmcK_h)} 
\lesssim h^{s-m} | \hatv |_{H^{s}(\widehat{U(\Omega)})}\,, 
\qquad 0\leq m \leq s \leq p+1
\end{equation}
In the physical domain we define $\pi_h : H^1(U(\Omega)) \rightarrow V_h$ by
\begin{equation}
\widehat{\pi_h v} = \widehat{\pi}_{h} \hatv 
\end{equation}
Since the Scott--Zhang interpolation operator is a projection, this is also the case for $\pi_h$.

\begin{rem}[Choice of Scaling Parameter $\boldsymbol\gamma$]\label{remark:choice-of-gamma}
To obtain optimal order interpolation estimates we need control over $s=p+1$ derivatives in the reference domain.
A condition on $\gamma$ for obtaining full regularity of the pullback of the solution in the reference domain was given in Remark~\ref{remark:gamma-cond} and choosing $k=p+1$ therein we arrive at the condition
\begin{equation}\label{eq:gamma-condition-p}
\gamma > \frac{p\omegac}{\pi}
\end{equation}
As $\omegac \in (\pi,2\pi)$ a choice that holds for any opening angle is $\gamma = 2p$.
\end{rem}

\paragraph{Stabilization Form.}
By construction the stabilization form \eqref{eq:sh} satisfies the following abstract properties:
\begin{itemize}
\item The form is consistent, i.e.,
\begin{equation} \label{eq:sh-consistent}
\hats_h(\hatv,\hatv)=0
\,,\qquad\forall \hatv\in H^{p+1}(\hatOmega)
\end{equation}
\item The form satisfies the interpolation estimate
\begin{equation} \label{eq:sh-interpolation}
\hats_h(\hatpi_h\hatv,\hatpi_h\hatv) \lesssim h^{2(s-1)} |\hatv |_{H^{s}(\widehat{U(\Omega)})}^2
\,,\qquad\forall \hatv\in H^{s}(\hatOmega)
\,,\ 1 \leq s \leq p+1
\end{equation}

\item The following inverse inequality holds
\begin{equation} \label{eq:sh-inverse-ineq}
\| h^{1/2} \hatnabla\hatv\|_{\partial\hatOmega}^2
\lesssim
\| \hatnabla\hatv \|_{\hatOmega}^2 + \hats_h(\hatv,\hatv)
\,,\qquad\forall \hatv \in \hatV_h
\end{equation}
\end{itemize}

\paragraph{Energy Norm.}
Define the following energy norms associated with the Nitsche method
\begin{equation}\label{eq:energynorm-ref}
\tn \hatv \tn_{h}^2 
= \| \hatnabla \hatv \|^2_{\hatOmega} 
+  \| h^{1/2} \hatnabla \hatv \|^2_{\partial\hatOmega} 
+ \| h^{-1/2} \hatv \|^2_{\partial\hatOmega} + \| \hatpi_h \hatv \|^2_{\hats_h}
\end{equation}
and the push forward energy norm
\begin{equation}\label{eq:energynorm-phys}
\tn v \tn_{h,\Omega} = \tn \hatv \tn_{h}
\end{equation}
%Note that the notation for both norms is identical and distinction between the norms is implied by the argument being a function of physical respectively reference coordinates.

\paragraph{Interpolation in Energy Norm.}
For $\hatv\in H^{s}(\hatOmega)$ we on every $K \in \hatmcK_h$ have $(\hatv - \hatpi_h\hatv)|_K \in H^{s}(K)$.
Assuming $s\geq 2$ and using the element-wise trace inequality
\begin{equation}
\| \hatw \|^2_{\partial\hatOmega \cap K} \lesssim h^{-1} \| \hatw \|^2_{K} + h \| \hatnabla \hatw \|^2_{K} \,,\qquad\forall \hatw|_{K} \in H^1(K)\,,\ \forall K\in\hatmcK_h
\end{equation}
which holds independent of the position of $\partial\hatOmega$ in $K$, see \cite{HanHanLar2003}, and the interpolation estimate \eqref{eq:Hm-interpolation}
%, and the properties \eqref{eq:sh-consistent} and \eqref{eq:sh-interpolation} of the stabilization form $\hats_h$,
we obtain
\begin{equation}\label{eq:interpol-energy-norm-ref}
 \tn \hatv - \hatpi_{h} \hatv \tn_{h} 
\lesssim h^{s-1} \| \hatD^{s} \hatv \|_{\widehat{U(\Omega)}} \,,\qquad 1 \leq s \leq p+1
\end{equation}
Using the definition of the push forward energy norm (\ref{eq:energynorm-phys}), the interpolation estimate 
in the reference domain (\ref{eq:interpol-energy-norm-ref}), and the estimate (\ref{eq:norm-eq-Hkalpha}), we obtain the following interpolation estimate in the physical domain
\begin{equation}\label{eq:interpol-energy-norm-phys}
 \tn v - \pi_h v \tn_{h,\Omega} 
\lesssim 
h^{s-1} 
\| v \|_{H^{s,\alpha}(\Omega)} \,,\qquad 1 \leq s \leq p+1
\end{equation}
More precisely we proceeded as follows
\begin{equation}
\tn v -\pi_h v \tn_{h,\Omega} 
= 
\tn \hatv - \widehat{\pi_h v} \tn_{h}
=
\tn \hatv - \hatpi_{h} {\hatv} \tn_{h}
\lesssim 
h^{s-1} \| \hatD^{s}\hatv \|_{\widehat{U(\Omega)}}
\lesssim 
h^{s-1} \| v \|_{H^{s,\alpha}(\Omega)}
\end{equation}

\paragraph{Continuity and Coercivity.}
It holds
\begin{equation}\label{eq:continuity-ref}
\hatA_h(\hatv,\hatw) \lesssim \tn \hatv \tn_h \tn \hatw \tn_h \,, \qquad \hatv,\hatw \in H^{3/2+\epsilon}(\hatOmega) + \hatV_h
\end{equation}
and for $\beta>0$ large enough
\begin{equation}\label{eq:coercivity-ref}
\tn \hatv \tn_h^2 \lesssim \hatA_h(\hatv,\hatv) \,, \qquad \hatv \in \hatV_h
\end{equation}
These results follows directly using standard arguments for cut finite elements 
since the mesh is quasi-uniform in the reference domain and $B$ is positive definite 
and bounded, see (\ref{eq:B-posdef}) and (\ref{eq:B-cont}).
\begin{proof}
\textbf{(\ref{eq:continuity-ref}).}
Using the triangle and the Cauchy--Schwarz inequalities, and the bound \eqref{eq:B-cont-n} on $B^2$ we readily get
\begin{align}
| \hatA_h(\hatv,\hatw) | &\leq
\| B \hatnabla \hatv \|_{\hatOmega} \| \hatnabla \hatw \|_{\hatOmega}
\\&\quad\nonumber
+
\| h^{1/2} B \hatnabla \hatv \|_{\partial\hatOmega}
\| h^{-1/2} \hatw \|_{\partial\hatOmega}
+
\| h^{-1/2} \hatv \|_{\partial\hatOmega}
\| h^{1/2} B \hatnabla \hatw \|_{\partial\hatOmega}
\\&\quad\nonumber
+
\beta \| h^{-1/2} \hatv \|_{\partial\hatOmega}
\| h^{-1/2} \hatw \|_{\partial\hatOmega}
+
\| \hatpi_h \hatv \|_{\hats_h}
\| \hatpi_h \hatw \|_{\hats_h}
\\
&\leq
C_\gamma^2
\Bigl(
\| \hatnabla \hatv \|_{\hatOmega} \| \hatnabla \hatw \|_{\hatOmega}
\\&\qquad\quad\nonumber
+
\| h^{1/2} \hatnabla \hatv \|_{\partial\hatOmega}
\| h^{-1/2} \hatw \|_{\partial\hatOmega}
+
\| h^{-1/2} \hatv \|_{\partial\hatOmega}
\| h^{1/2} \hatnabla \hatw \|_{\partial\hatOmega} \Bigr)
\\&\quad\nonumber
+
\beta \| h^{-1/2} \hatv \|_{\partial\hatOmega}
\| h^{-1/2} \hatw \|_{\partial\hatOmega}
+
\| \hatpi_h \hatv \|_{\hats_h}
\| \hatpi_h \hatw \|_{\hats_h}
\\&
\lesssim
\tn \hatv \tn_h \tn \hatw \tn_h
\end{align}
with a hidden constant $c = C_\gamma^{2} + \beta$.

\paragraph{(\ref{eq:coercivity-ref}).}
Using the Cauchy--Schwarz inequality on the consistency term, and applying a Young's inequality $2ab \leq \delta a^2 + \delta^{-1} b^2$ with $\delta>0$, we get the bound
\begin{align}  \label{eq:const-bound-a}
2\bigl| (\hatn \cdot B \hatnabla\hatv, \hatv)_{\partial\hatOmega} \bigr|
&\leq
2\| h^{1/2} B \hatnabla\hatv \|_{\partial\hatOmega} \| h^{-1/2} \hatv \|_{\partial\hatOmega}
\\
&\leq
\delta \| h^{1/2} B \hatnabla\hatv \|_{\partial\hatOmega}^2
+
\delta^{-1} \| h^{-1/2} \hatv \|_{\partial\hatOmega}^2
\\ \label{eq:const-bound-c}
&\leq
C_\gamma^2 \delta \| h^{1/2} \hatnabla\hatv \|_{\partial\hatOmega}^2
+
\delta^{-1} \| h^{-1/2} \hatv \|_{\partial\hatOmega}^2
\\&\leq  \label{eq:const-bound-d}
c_{I} C_\gamma^2 \delta \left( \| \hatnabla\hatv \|_{\hatOmega}^2 + \hats_h(\hatv,\hatv) \right)
+
c_{I} \delta^{-1} \| h^{-1/2} \hatv \|_{\partial\hatOmega}^2
\end{align}
where we in \eqref{eq:const-bound-c} use that $B^2$ is bounded via \eqref{eq:B-cont-n} and in the last step utilize the inverse inequality \eqref{eq:sh-inverse-ineq} with hidden constant $c_I$, which is possible since $\hatv\in\hatV_h$.
By the positive definiteness of $B$ \eqref{eq:B-posdef} and the bound \eqref{eq:const-bound-a}--\eqref{eq:const-bound-d} we now obtain
\begin{align}
\hatA_h(\hatv,\hatv)
&=
(\hatnabla\hatv,B \hatnabla\hatv)_{\hatOmega}
-2 (\hatn \cdot B \hatnabla\hatv, \hatv)_{\partial\hatOmega}
+ (\beta h^{-1} \hatv, \hatv)_{\partial\hatOmega}
+ \hats_h(\hatv,\hatv)
\\&\geq c_\gamma\| \hatnabla\hatv \|_{\hatOmega}^2
-2 \bigl| (\hatn \cdot B \hatnabla\hatv, \hatv)_{\partial\hatOmega} \bigr|
+ \beta \| h^{-1/2}\hatv \|_{\partial\hatOmega}^2
+ \hats_h(\hatv,\hatv)
\\&\geq \label{eq:Ah-bound-c}
(c_\gamma - c_I C_\gamma^2 \delta) \left( \| \hatnabla\hatv \|_{\hatOmega}^2 + \hats_h(\hatv,\hatv) \right)
+
(\beta - c_I \delta^{-1}) \| h^{-1/2}\hatv \|_{\partial\hatOmega}^2
\end{align}
Choosing $\delta < c_\gamma c_I^{-1} C_\gamma^{-2}$ and $\beta > c_I \delta^{-1}$ yield positive constants in \eqref{eq:Ah-bound-c}. The proof is completed by transforming part of the $\left( \| \hatnabla\hatv \|_{\hatOmega}^2 + \hats_h(\hatv,\hatv) \right)$ term into a $\| h^{1/2} \hatnabla\hatv\|_{\partial\hatOmega}^2$ term via the inverse inequality \eqref{eq:sh-inverse-ineq}.
\end{proof}

Analogously, in physical coordinates it holds
\begin{equation}\label{eq:continuity-phys}
A_h(v,w) \lesssim \tn v \tn_{h,\Omega} \tn w \tn_{h,\Omega} \,, \qquad v,w \in H^{2,\alpha}(\Omega) + V_h
\end{equation}
and for $\beta$ large enough
\begin{equation}\label{eq:coercivity-phys}
\tn v \tn_{h,\Omega}^2 \lesssim A_h(v,v)\,, \qquad \forall v \in V_h
\end{equation}
These results follows using the definition of the energy norms (\ref{eq:energynorm-phys}) 
and the corresponding results (\ref{eq:continuity-ref}) and (\ref{eq:coercivity-ref}) in 
the reference domain. More precisely 
\begin{equation}
A_h(v,w) = \hatA_{h} (\hatv,\hatw) \lesssim \tn \hatv \tn_{h} \tn \hatw \tn_{h} 
=  \tn v \tn_{h,\Omega} \tn w \tn_{h,\Omega} 
\end{equation}
and 
\begin{equation}
\tn v \tn_{h,\Omega}^2 = \tn \hatv \tn_{h}^2 \lesssim \hatA_{h} (\hatv,\hatv) 
= A_h(v,v)
\end{equation}

\subsection{Error Estimates}
\begin{thm} \label{thm:error}
For $\gamma > p\omegac/\pi$ it holds 
\begin{align}
\label{eq:error-est-energy}
\tn u - u_h \tn_{h,\Omega} 
&\lesssim  {h}^p \| u \|_{H^{p+1,\alpha}(\Omega)}
\\
\label{eq:error-est-L2}
\| u - u_h \|_{\Omega} 
&\lesssim  
{h}^{p+1} \| u \|_{H^{p+1,\alpha}(\Omega)}
\end{align}
with powers $\{\alpha_j\}_{j=1}^{p+1}$ in the weighted Sobolev norm given by \eqref{eq:aj-ak} using $k=p+1$.
\end{thm}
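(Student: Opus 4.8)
The plan is to run the standard Strang-type argument for a nonconsistent Nitsche method, but carried out in the energy norm $\tn\cdot\tn_{h,\Omega}$ and anchored by the interpolation estimate \eqref{eq:interpol-energy-norm-phys} and the perturbed Galerkin orthogonality \eqref{eq:gal-ort}. First I would prove the energy estimate \eqref{eq:error-est-energy}. Split the error as $u-u_h = (u-\pi_h u) + (\pi_h u - u_h)$; the first term is controlled directly by \eqref{eq:interpol-energy-norm-phys} with $s=p+1$, so it suffices to bound $e_h := \pi_h u - u_h \in V_h$. Using coercivity \eqref{eq:coercivity-phys}, then \eqref{eq:gal-ort} to replace $A_h(u-u_h, e_h)$ by $s_h(\pi_h u, e_h)$, we get
\begin{equation*}
\tn e_h \tn_{h,\Omega}^2 \lesssim A_h(e_h, e_h) = A_h(\pi_h u - u, e_h) + A_h(u - u_h, e_h) = A_h(\pi_h u - u, e_h) + s_h(\pi_h u, e_h).
\end{equation*}
The first term is bounded by continuity \eqref{eq:continuity-phys} and \eqref{eq:interpol-energy-norm-phys}; the consistency term $s_h(\pi_h u, e_h)$ is handled by Cauchy--Schwarz in the $s_h$-seminorm together with the interpolation bound \eqref{eq:sh-interpolation} (equivalently the $\|\hatpi_h\hatv\|_{\hats_h}$ part of \eqref{eq:interpol-energy-norm-ref}), giving $s_h(\pi_h u, e_h)^{1/2} \lesssim h^p \|u\|_{H^{p+1,\alpha}(\Omega)}$ after transferring to the reference domain and applying \eqref{eq:norm-eq-Hkalpha}. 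Dividing by $\tn e_h\tn_{h,\Omega}$ and using the triangle inequality yields \eqref{eq:error-est-energy}. The condition $\gamma > p\omega_c/\pi$ is exactly what Remark~\ref{remark:gamma-cond} (with $k=p+1$) requires for $\|u\|_{H^{p+1,\alpha}(\Omega)}$ to be finite with the $\alpha_j$ from \eqref{eq:aj-ak}.

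For the $L^2$ estimate \eqref{eq:error-est-L2} I would use the Aubin--Nitsche duality trick. Let $\phi$ solve the adjoint problem $-\Delta\phi = u - u_h$ in $\Omega$, $\phi = 0$ on $\partial\Omega$; since $\omega_c < 2\pi$ this problem also has the weighted regularity $\|\phi\|_{H^{2,\alpha}(\Omega)} \lesssim \|u-u_h\|_\Omega$ (indeed $\phi$ has the same singular structure, and here we only need $k=2$, so even modest grading suffices — this is where the remark about the form $A_h$ being well defined for low-regularity exact solutions via the $\pi_h$ in $s_h$ is used). Writing $\|u-u_h\|_\Omega^2 = a_h(u-u_h, \phi)$ by the (consistent part of the) weak formulation applied to $\phi$, then inserting $\pi_h\phi$ and using \eqref{eq:gal-ort} to exchange $A_h(u-u_h, \pi_h\phi)$ for $s_h(\pi_h u, \pi_h\phi)$, one reduces to three terms: $A_h(u-u_h, \phi - \pi_h\phi)$, a stabilization-consistency term $s_h(\pi_h u, \pi_h\phi)$, and a term accounting for the discrepancy between $a_h$ and $A_h$ on $\phi$ (i.e. $s_h(\pi_h(u-u_h), \pi_h\phi)$-type contributions). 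The first is bounded by continuity, \eqref{eq:interpol-energy-norm-phys} applied to $\phi$ with $s=2$ (giving an $h^1$), and \eqref{eq:error-est-energy} (an $h^p$); the stabilization terms are bounded via Cauchy--Schwarz in $s_h$ and \eqref{eq:sh-interpolation} applied to both $u$ ($h^p$) and $\phi$ ($h^1$). Collecting, every term carries $h^{p+1}$, and after cancelling one factor of $\|u-u_h\|_\Omega$ we obtain \eqref{eq:error-est-L2}.

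The main obstacle I anticipate is the careful bookkeeping of the inconsistency introduced by $\pi_h$ in $s_h$: both the perturbed orthogonality \eqref{eq:gal-ort} and the fact that $s_h$ sees $\pi_h u_h = u_h$ but $\pi_h u \neq u$ must be tracked so that no term is dropped, and one must verify that the $s_h$-consistency term genuinely contributes only $O(h^p)$ in the energy estimate and $O(h^{p+1})$ in the $L^2$ estimate rather than something larger — this hinges on pairing the $h^p$ interpolation bound for $u$ with the $h^1$ interpolation bound for the dual variable in the $L^2$ case. A secondary technical point is justifying the weighted regularity and admissibility of the dual solution $\phi$ in the energy norm $\tn\cdot\tn_{h,\Omega}$ (in particular that $\phi \in H^{2,\alpha}(\Omega) + V_h$ so that continuity \eqref{eq:continuity-phys} applies to it), and confirming that the grading parameter already chosen for $u$ is, a fortiori, strong enough for the lower-order regularity needed for $\phi$. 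Everything else is routine: transfer to the reference domain, use quasi-uniformity, and invoke the abstract stabilization properties \eqref{eq:sh-consistent}--\eqref{eq:sh-inverse-ineq}.
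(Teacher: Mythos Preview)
Your proposal is correct and follows essentially the same argument as the paper: coercivity plus perturbed Galerkin orthogonality plus continuity for the energy estimate, and Aubin--Nitsche duality with the weighted regularity $\|\phi\|_{H^{2,\alpha}}\lesssim\|u-u_h\|_\Omega$ for the $L^2$ estimate. Your version is in fact slightly cleaner, since you apply coercivity only to $e_h=\pi_h u-u_h\in V_h$ (the paper applies it directly to $u-u_h$) and you explicitly track the residual $s_h$-contribution in the duality step that the paper absorbs without comment.
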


\begin{proof} {\bf (\ref{eq:error-est-energy}).}  Using the coercivity (\ref{eq:coercivity-phys}), 
Galerkin orthogonality (\ref{eq:gal-ort}), continuity (\ref{eq:continuity-phys}), and the interpolation 
estimate (\ref{eq:interpol-energy-norm-phys}),  the estimate follows directly 
\begin{align}
\tn u - u_h \tn^2_{h,\Omega}
&
\lesssim A_h(u-u_h,u-u_h) 
\\
&= A_h(u-u_h, u- \pi_h u ) + s_h(\pi_h u, \pi_h u - u_h)
\\
&\lesssim 
\tn u-u_h \tn_{h,\Omega} \tn u- \pi_h u \tn_{h,\Omega}
+
\| \pi_h u \|_{s_h} \| \pi_h u - u_h \|_{s_h}
\\
&\lesssim
{h}^p 
\tn u-u_h \tn_{h,\Omega}  
\| u \|_{H^{p+1,\alpha}(\Omega)}
\end{align}
\paragraph{(\ref{eq:error-est-L2}).}
For the $L^2$ estimate we let $\phi\in H^{1}(\Omega)$ be the solution to the dual problem
\begin{equation}\label{eq:dual-problem}
a(v,\phi) = (\psi,v)_\Omega \,,\qquad \forall v \in H^{1}(\Omega)
\end{equation}
for which we have the weighted regularity estimate 
\begin{equation} \label{eq:dual-regularity}
\| \phi \|_{H^{2,\alpha}(U(\Omega))} 
\lesssim 
\|\psi \|_{\Omega}
\end{equation}
Setting $\psi = u-u_h$ and using the dual problem \eqref{eq:dual-problem}, Galerkin orthogonality (\ref{eq:gal-ort}), continuity (\ref{eq:continuity-phys}) and the Cauchy--Schwarz inequality, the interpolation estimates  (\ref{eq:interpol-energy-norm-phys}) and \eqref{eq:sh-interpolation}, and the regularity estimate \eqref{eq:dual-regularity},
we obtain 
\begin{align}
\| u-u_h \|^2_\Omega &= A_h(e,\phi) 
\\
&= A_h( u - u_h ,\phi - \pi_h \phi) - s_h(\pi_h u, \pi_h \phi)
 \\
&\lesssim \tn u - u_h \tn_{h,\Omega} \tn \phi - \pi_h \phi \tn_{h,\Omega}
+
\| \pi_h u \|_{s_h} \| \pi_h \phi \|_{s_h}
 \\
&\lesssim h^{p+1} \| u \|_{H^{p+1,\alpha}(U(\Omega))}  \| \phi \|_{H^{2,\alpha}(U(\Omega))} 
\\
&\lesssim 
 h^{p+1} \| u \|_{H^{p+1,\alpha}(U(\Omega))} \| \psi \|_\Omega
\\
&=
 h^{p+1} \| u \|_{H^{p+1,\alpha}(U(\Omega))} \|  u-u_h  \|_\Omega
\end{align}
which completes the proof.
\end{proof}

\section{Numerical Experiments} \label{section:numerics}

\paragraph{Implementation.}
The method was implemented using the formulation in reference Cartesian coordinates \eqref{eq:method-ref}
and other than that we follow the implementation outlined in \cite{JoLaLa2017} including the quadrature. A limitation due to our current implementation of certain geometrical operations, for example computing the intersection between an element and the domain, is that the geometry must be represented as a polygon. However, in our examples this polygon approximation is of very high resolution compared to the mesh size and should have a negligible effect on the results.

\paragraph{Approximation Space.}
As approximation space $V_h$ we use the space spanned by piecewise quadratic B-splines of maximum regularity, i.e., $V_h \subset C^1(\Omega) \subset H^2(\Omega)$. These are constructed as tensor products of 1D quadratic B-spines on a structured grid, which in general is cut by the domain. For reference purposes we also briefly consider conforming approximation spaces in the form of standard Lagrange triangular elements of various orders.

\paragraph{Parameter Values.}
For the mapping $F_\gamma$
we choose the radial scaling parameter $\gamma=2p$, as this choice holds for all opening angles $\omegac\in(\pi,2\pi)$, see Remark~\ref{remark:choice-of-gamma}.
In the method we use
the Nitsche penalty parameter $\beta=100$ and the stabilization form parameter $\tau=0.1$.

\subsection{Convergence and Stability}

\paragraph{Model Problem.}
To investigate the convergence and stability of the method we construct a model problem on a domain in the shape of a unit circle sector with various opening angles $\omegac \in (\pi,2\pi)$, see Figure~\ref{fig:unit-circle-sector}. On this domain we manufacture a problem with known singular solution by taking \eqref{eq:singular-part} as an ansatz for $u$ and deriving the corresponding right hand side $f$ and Dirichlet boundary data $g$.

\begin{figure}
\centering
\begin{subfigure}[t]{0.45\linewidth}\centering
\includegraphics[width=0.45\linewidth]{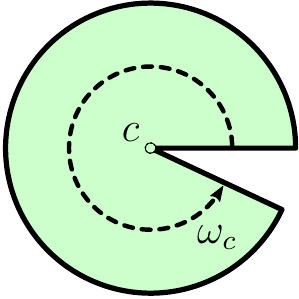}
\end{subfigure}
\centering
\caption{
Our model problem has a domain in the form of a unit circle sector with opening angle $\omegac$.
}
\label{fig:unit-circle-sector}
\end{figure}

\paragraph{Convergence.}
We here provide numerical studies of convergence using the model problem as extra validation of our theoretical a priori error estimates in Theorem~\ref{thm:error}.
First we, as a reference, apply the method to conforming finite element spaces in the form of standard Lagrange triangles of order $p=1,2,3$. These results are presented in Figure~\ref{fig:convergence-ref} where we note that the mapping indeed has the desired effect of recovering the optimal order convergence as implied by our estimates.

\begin{figure}
\centering	
	\begin{subfigure}[t]{0.32\linewidth}\centering
		\includegraphics[width=0.7\linewidth,trim=0 -65 0 0, clip]{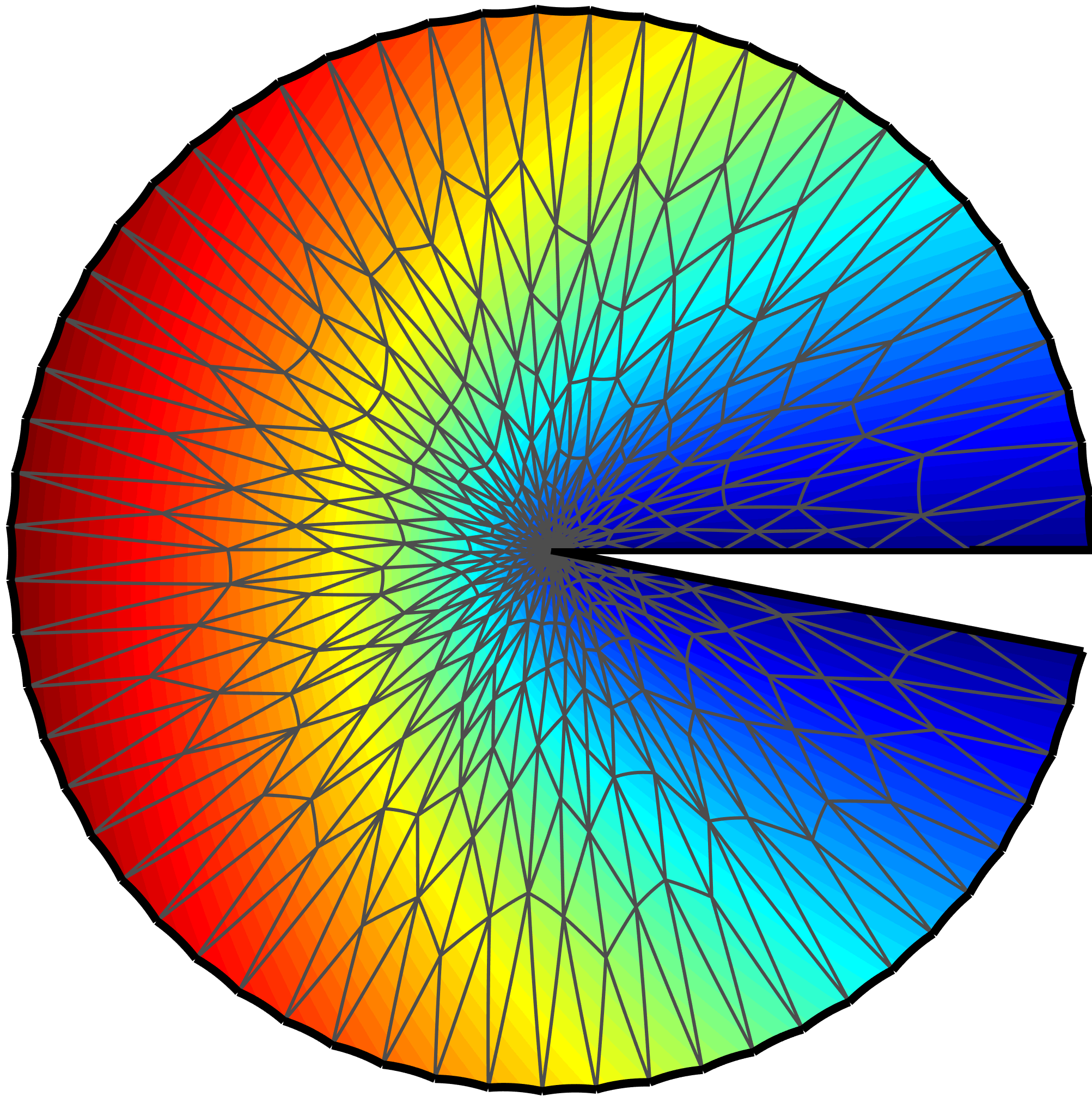}
		\subcaption{Numerical solution ($p=2$)}
		\label{fig:convergence-ref-a}
	\end{subfigure}
	\begin{subfigure}[t]{0.32\linewidth}\centering
		\includegraphics[width=0.985\linewidth]{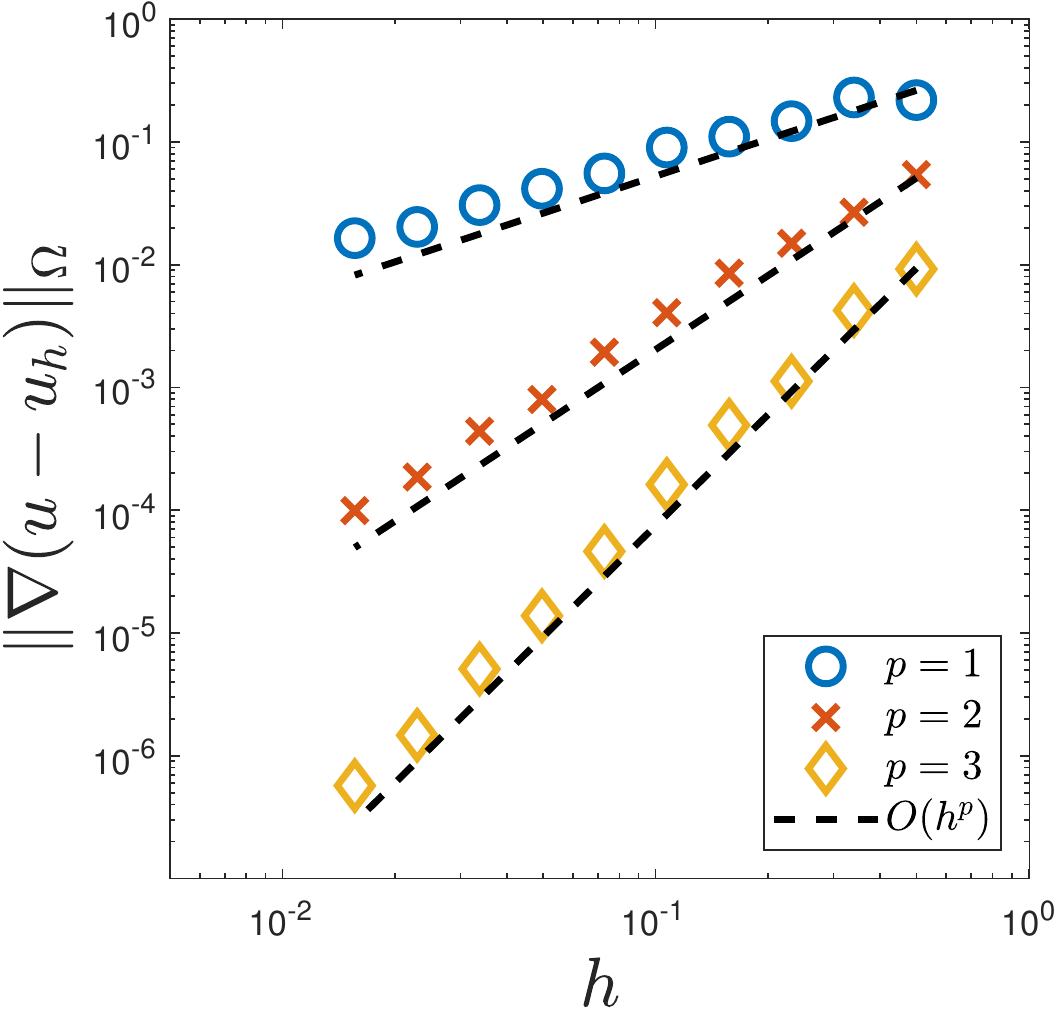}	
		\caption{Error in $H^1$ semi-norm}
		\label{fig:convergence-ref-b}
	\end{subfigure}
	\begin{subfigure}[t]{0.32\linewidth}\centering
		\includegraphics[width=1\linewidth]{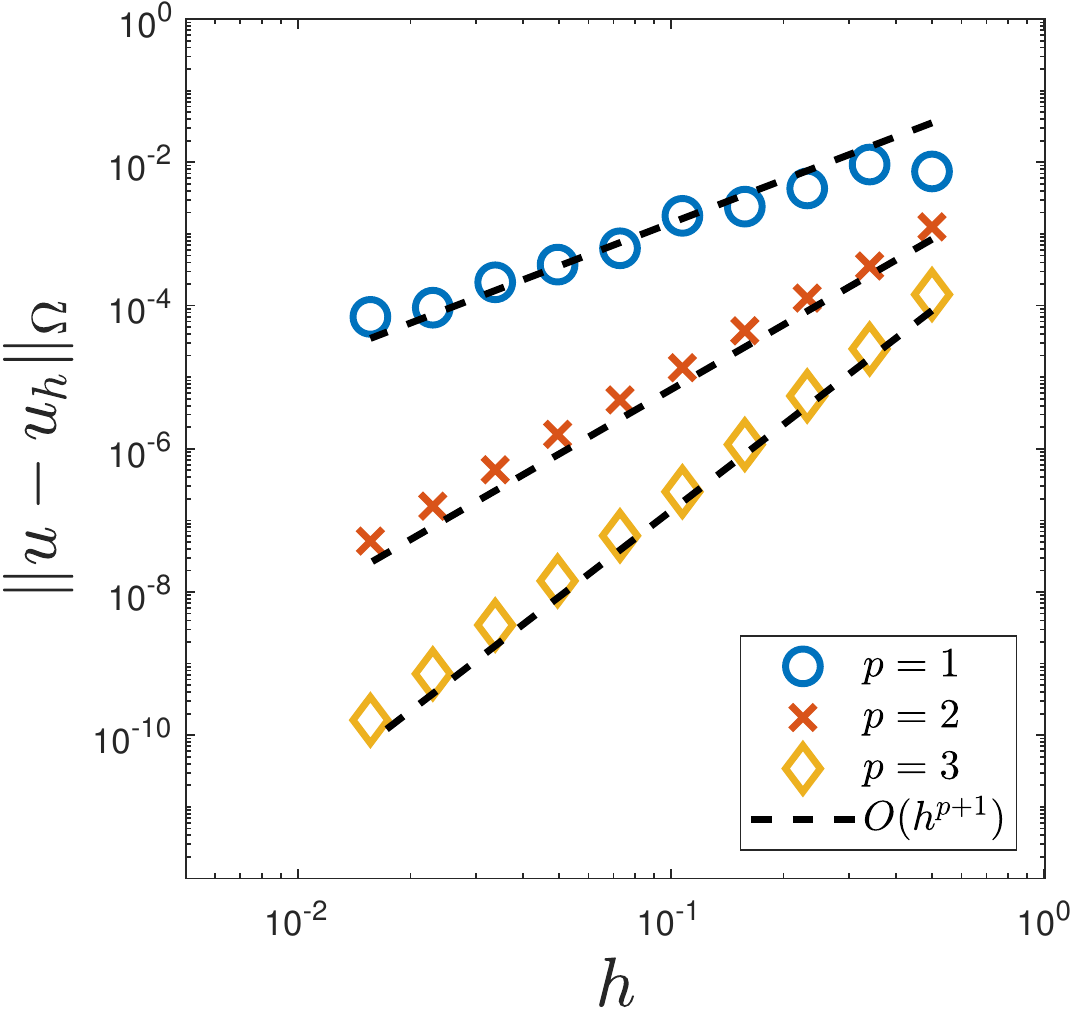}	
		\caption{Error in $L^2$ norm}
		\label{fig:convergence-ref-c}
	\end{subfigure}
\caption{
Convergence when applying the method to standard Lagrange elements on a fitted triangular mesh for the model problem with $\omegac = 0.97\times 2\pi$.
(a) Numerical solution using quadratic Lagrange elements. Note that the effect of the parametric mapping $F_\gamma$ is that the mesh, viewed in physcial coordinates, is graded towards the singularity.
(b)--(c) Convergence in $H^1(\Omega)$ semi-norm and in $L^2(\Omega)$ norm using Lagrange elements of order $p=1,2,3$. Note that optimal order convergence is recovered in all cases.
}
\label{fig:convergence-ref}
\end{figure}

Next we apply the method to cut finite element spaces in the form of $C^1$ splines, i.e., B-splines of order $p=2$, and the results are presented in Figure~\ref{fig:convergence-nosplit}. In this experiment we note an at least initial loss of optimal convergence rate when the opening angle $\omegac$ approaches $2\pi$.
Our explanation for this is that since the finite element space is constructed independently from the geometry the finite element space does not take the opening slit into account. Thus, every basis function that have  support on both sides of the slit actually provides an undesirable coupling over the slit. To remedy this we in the next paragraph outline and numerically assess a fix.
 
\begin{figure}
\centering	
	\begin{subfigure}[t]{0.32\linewidth}\centering
		\includegraphics[width=0.86\linewidth,trim=0 -25 0 0, clip]{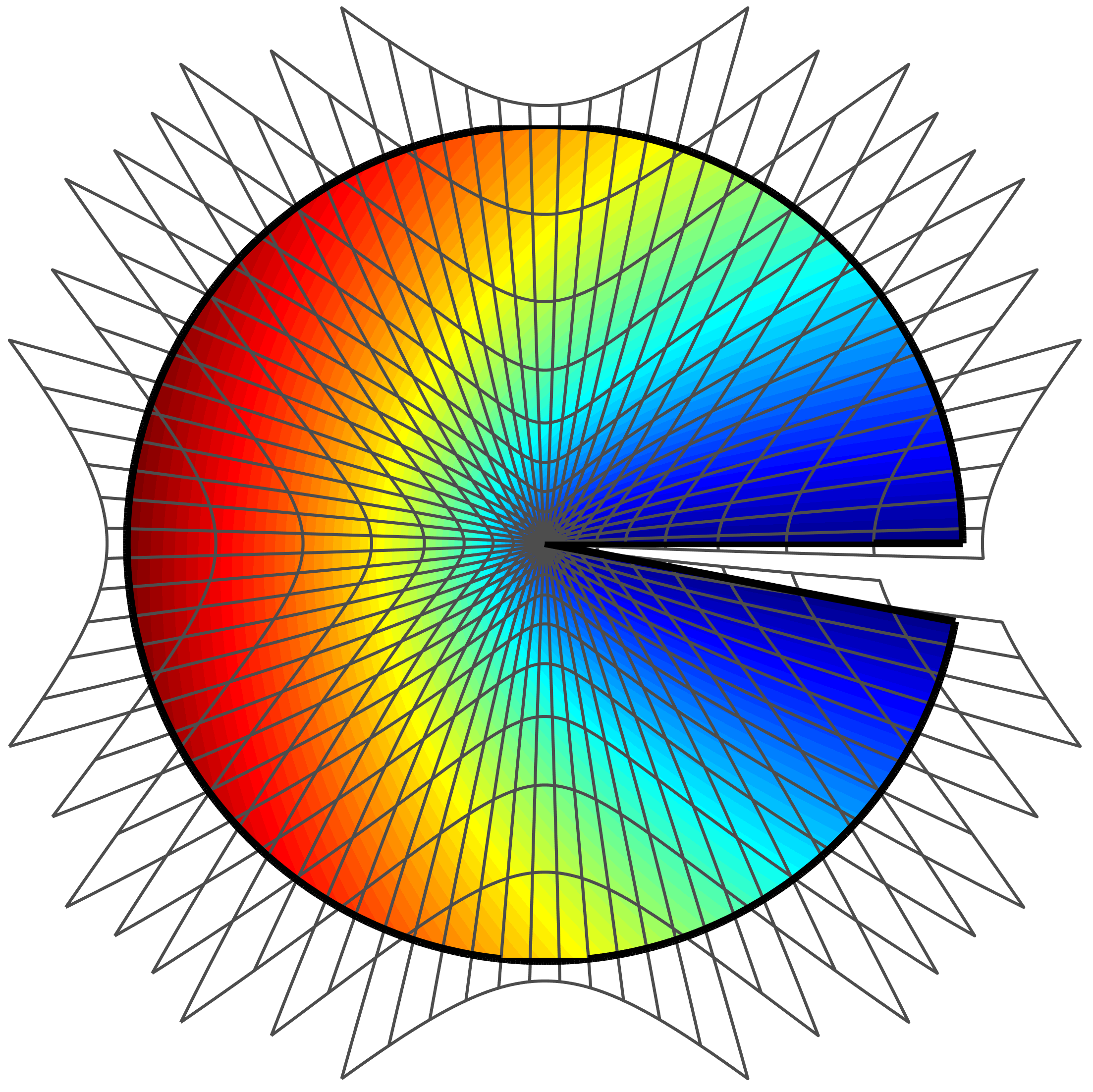}
		\subcaption{Numerical solution $C^1$ splines}
		\label{fig:convergence-nosplit-a}
	\end{subfigure}
	\begin{subfigure}[t]{0.32\linewidth}\centering
		\includegraphics[width=1\linewidth]{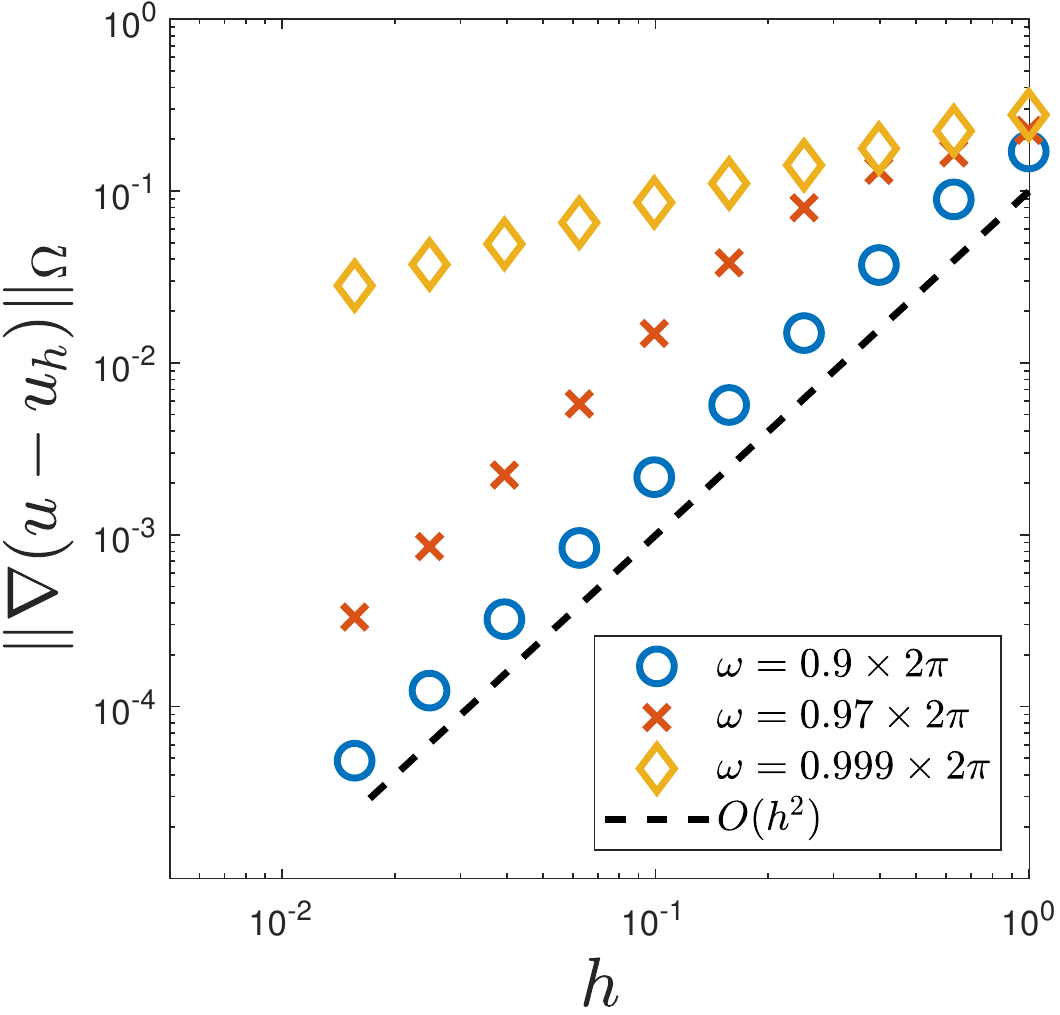}	
		\caption{Error in $H^1$ semi-norm}
		\label{fig:convergence-nosplit-b}
	\end{subfigure}
	\begin{subfigure}[t]{0.32\linewidth}\centering
		\includegraphics[width=1\linewidth]{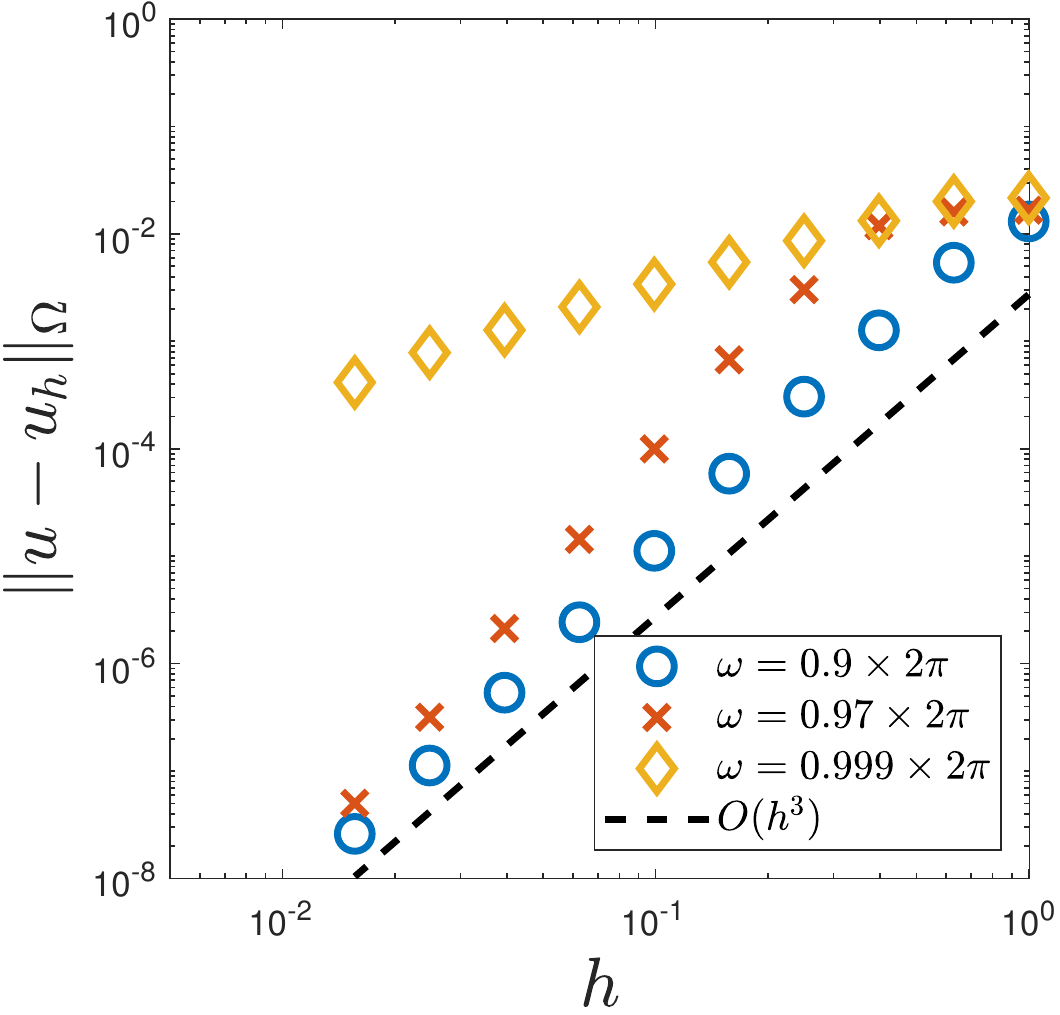}	
		\caption{Error in $L^2$ norm}
		\label{fig:convergence-nosplit-c}
	\end{subfigure}
\caption{
Convergence when applying the method to $C^1$ spline approximation spaces on a cut tensor product mesh for the model problem with various opening angles $\omegac$.
(a) Numerical solution with opening angle $\omegac=0.97\times 2\pi$. Note that the effect of the parametric mapping $F_\gamma$ is that the mesh, viewed in physcial coordinates, is graded towards the singularity. 
(b)--(c) Convergence in $H^1(\Omega)$ semi-norm and in $L^2(\Omega)$ norm. In the more extreme choices of opening $\omegac$ we do not, at least not initially, have optimal order convergence. We attribute this effect to unwanted coupling over the slit void caused by the cut approximation space being independent of the geometry and worsened by the relatively large support of $C^1$ spline basis functions.
} 
\label{fig:convergence-nosplit}
\end{figure}

\paragraph{Fix for Removing Unwanted Coupling.}
As a simple fix for removing the undesired coupling over the slit we propose the following approach:
Basis functions whose support consists of two disjoint parts, one above and one below the slit, are associated with two separate degrees of freedom, one connected to elements above the slit and one connected to elements below the slit. This is illustrated in Figure~\ref{fig:basis-split-a}. Note that this fix will not effect the regularity of the numerical solution. However, in the vicinity of the corner $c$ there will still exist some unwanted coupling as illustrated in Figure~\ref{fig:basis-split-b}.

\begin{figure}
\centering	
	\begin{subfigure}[t]{0.48\linewidth}\centering
		\includegraphics[width=0.87\linewidth]{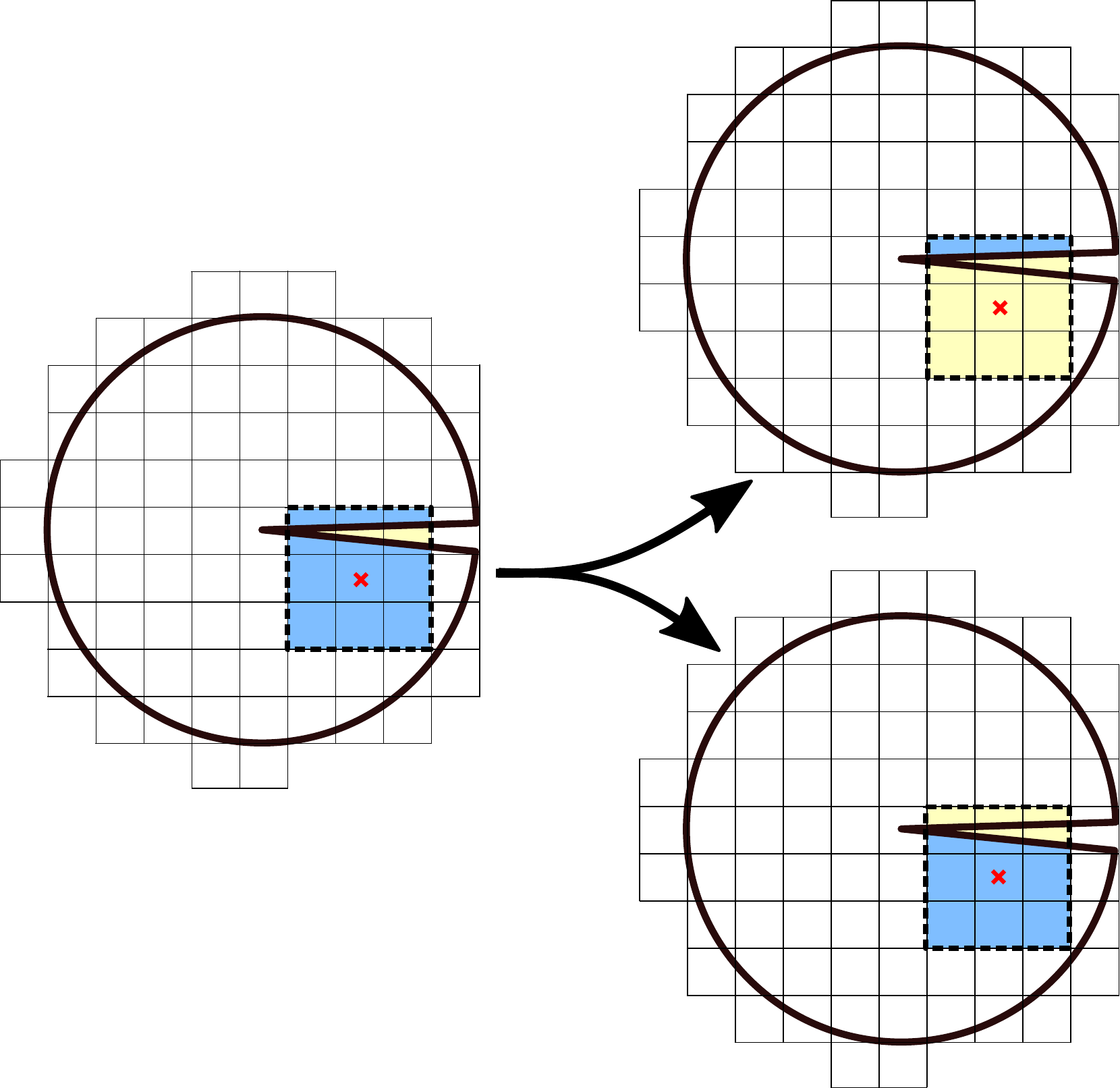}	
		\caption{Split of disjoint $\mathrm{supp}(\varphi_i) \cap \Omega$}
		\label{fig:basis-split-a}
	\end{subfigure}
	\begin{subfigure}[t]{0.48\linewidth}\centering
		\includegraphics[width=0.8\linewidth,trim=0 -125 0 0, clip]{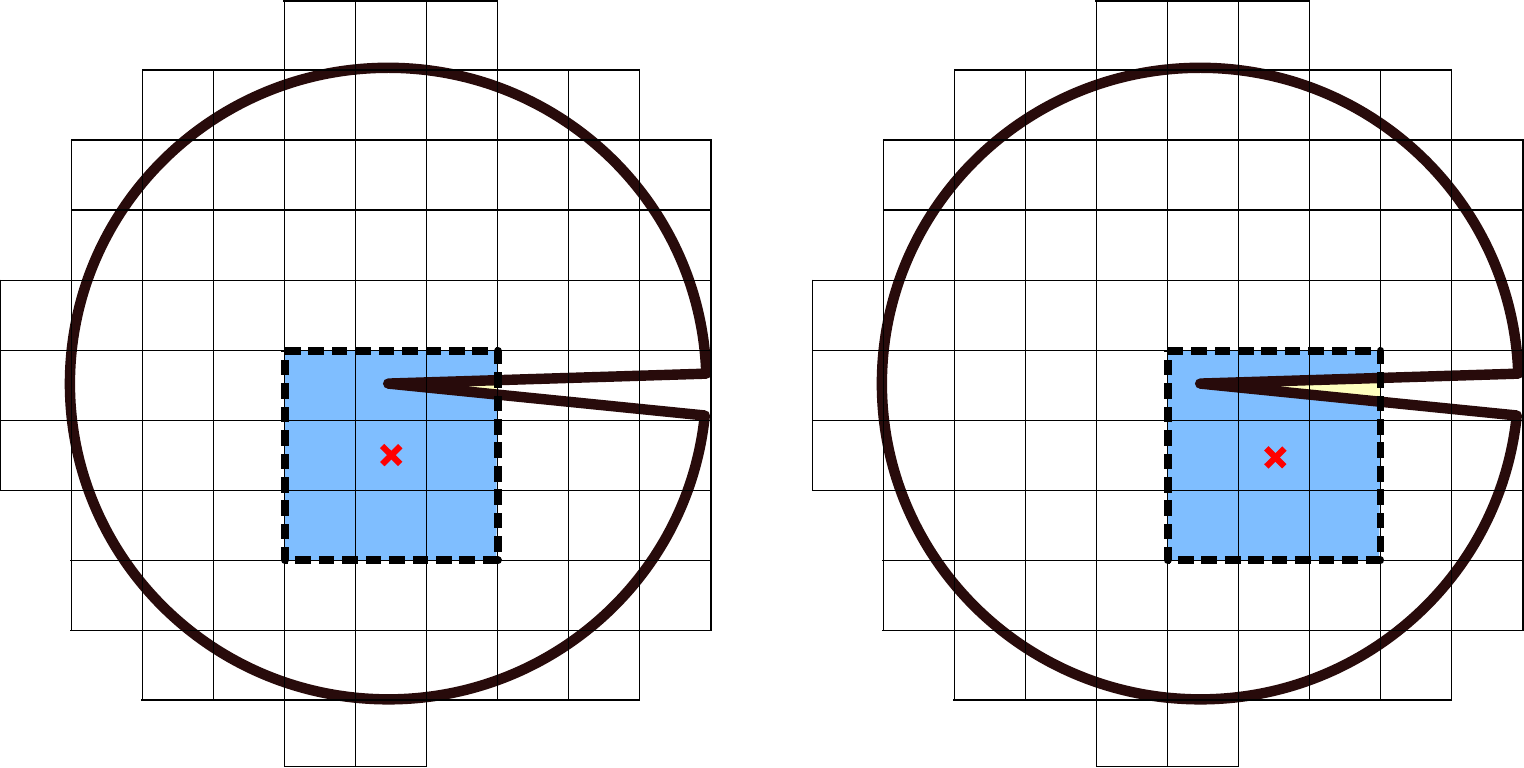}	
		\caption{Pathwise-connected $\mathrm{supp}(\varphi_i) \cap \Omega$}
		\label{fig:basis-split-b}
	\end{subfigure}
\caption{Fix for unwanted coupling.
(a) Illustration of how a basis function causing unwanted coupling over the slit where $\mathrm{supp}(\varphi_i) \cap \Omega$ consists of two disjoint parts may be viewed as two separate basis functions, associated with the part above respectively below the slit.
(b) Support of two basis functions causing unwanted coupling over the slit that we are unable to fix since the intersection between the support of each basis function and the domain is pathwise-connected.
} 
	\label{fig:basis-split}
\end{figure}

We numerically assess the effect of this fix in Figure~\ref{fig:convergence-split} where we in (a)--(b) note that we with again recover the optimal order convergence implied by our estimates also for opening angles $\omegac$ close to $2\pi$ when using a cut $C^1$ spline approximation space. In Figures~\ref{fig:convergence-split-c}--d we note that the effect of the fix is more pronounced the larger the opening angle. We should however remark that this fix is only necessary in more extreme cases of opening angles or when using a quite large mesh size.

\begin{figure}
\centering	
	\begin{subfigure}[t]{0.32\linewidth}\centering
		\includegraphics[width=1\linewidth]{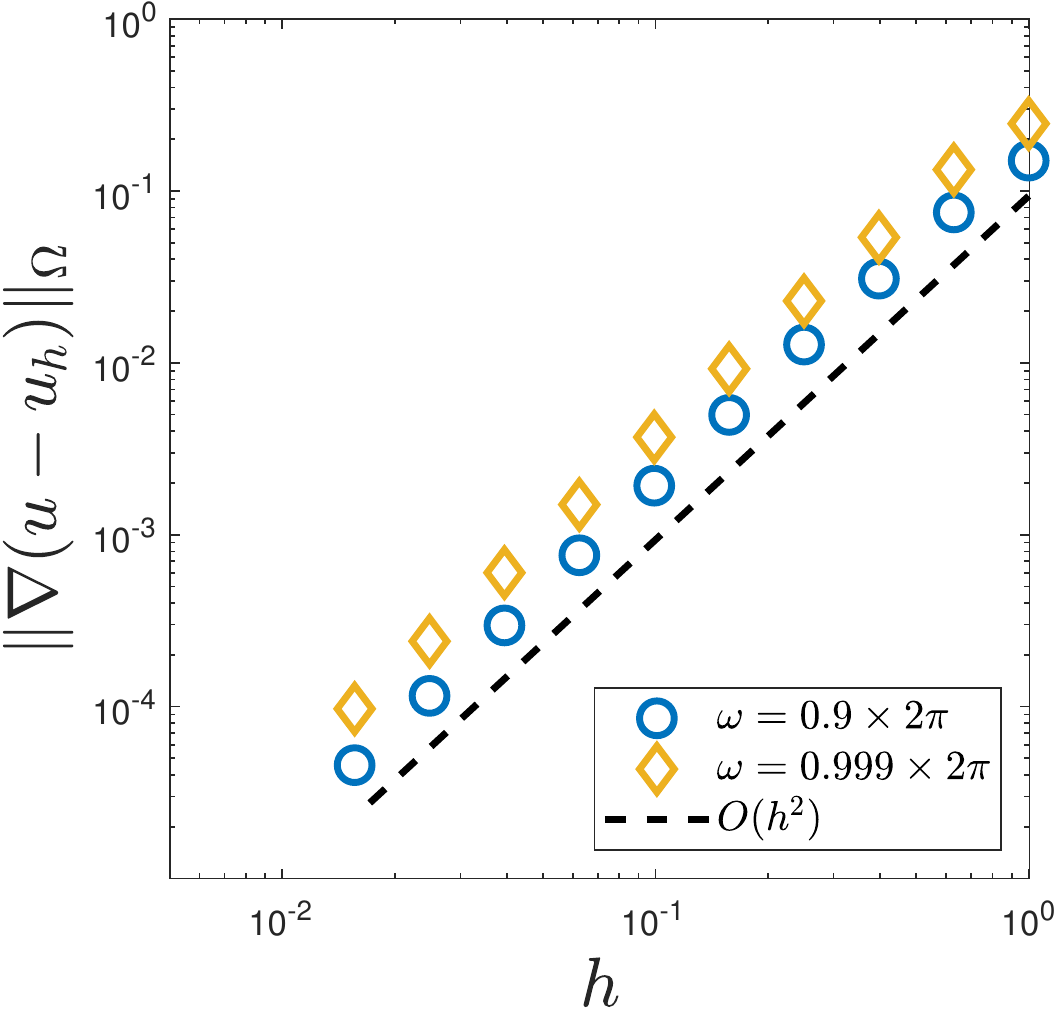}	
		\caption{Error in $H^1$ semi-norm}
		\label{fig:convergence-split-a}
	\end{subfigure}
	\
	\begin{subfigure}[t]{0.32\linewidth}\centering
		\includegraphics[width=1\linewidth]{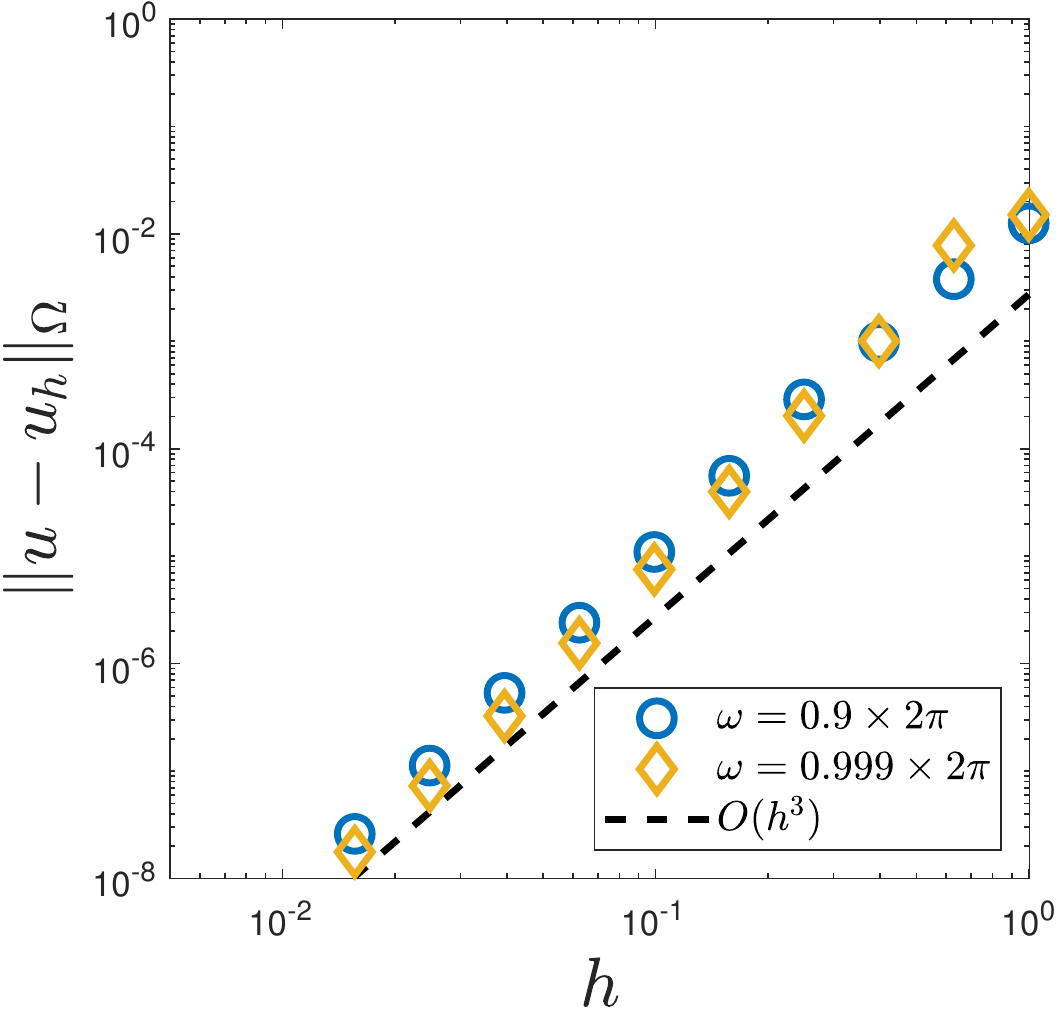}	
		\caption{Error in $L^2$ norm}
		\label{fig:convergence-split-b}
	\end{subfigure}

\vspace{1ex}
		\begin{subfigure}[t]{0.32\linewidth}\centering
		\includegraphics[width=1\linewidth]{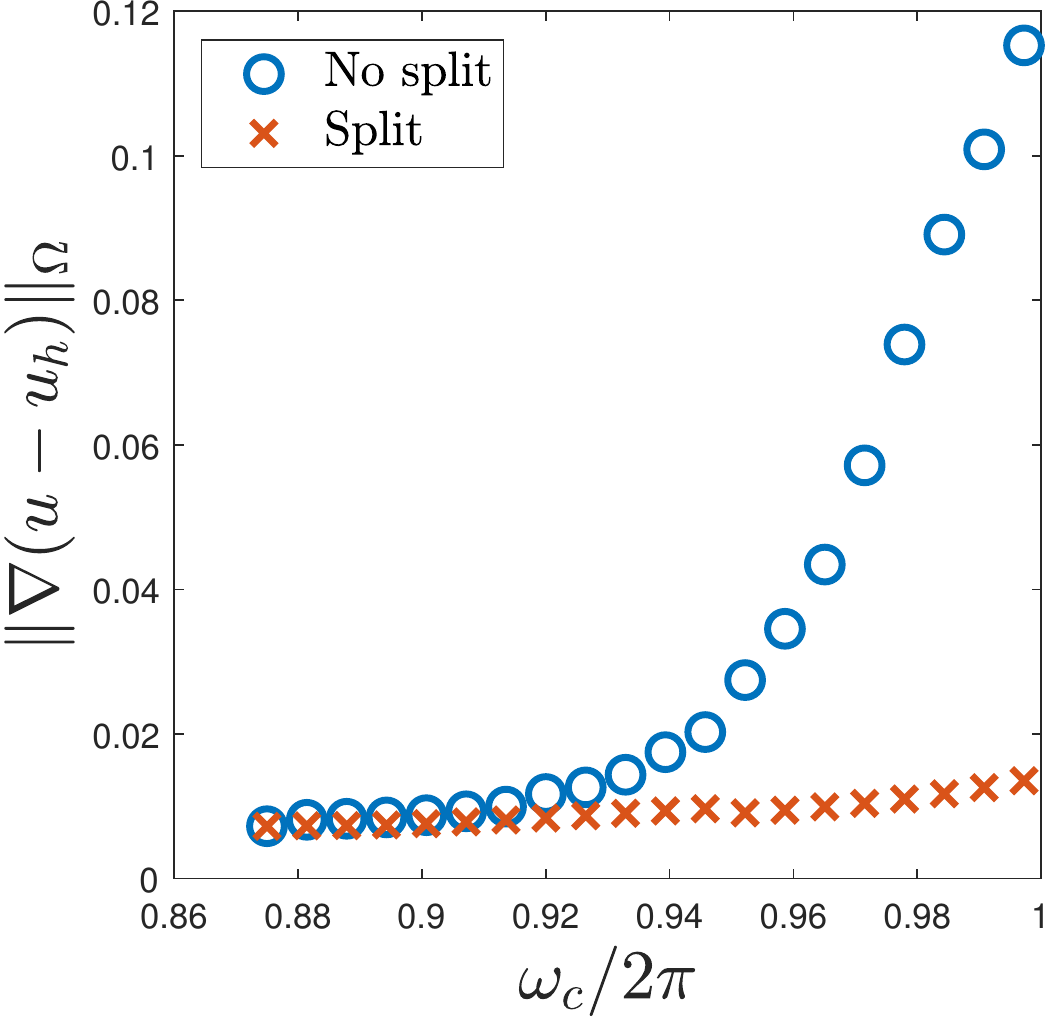}	
		\caption{Error in $H^1$ semi-norm}
		\label{fig:convergence-split-c}
	\end{subfigure}
	\
	\begin{subfigure}[t]{0.32\linewidth}\centering
		\includegraphics[width=0.96\linewidth]{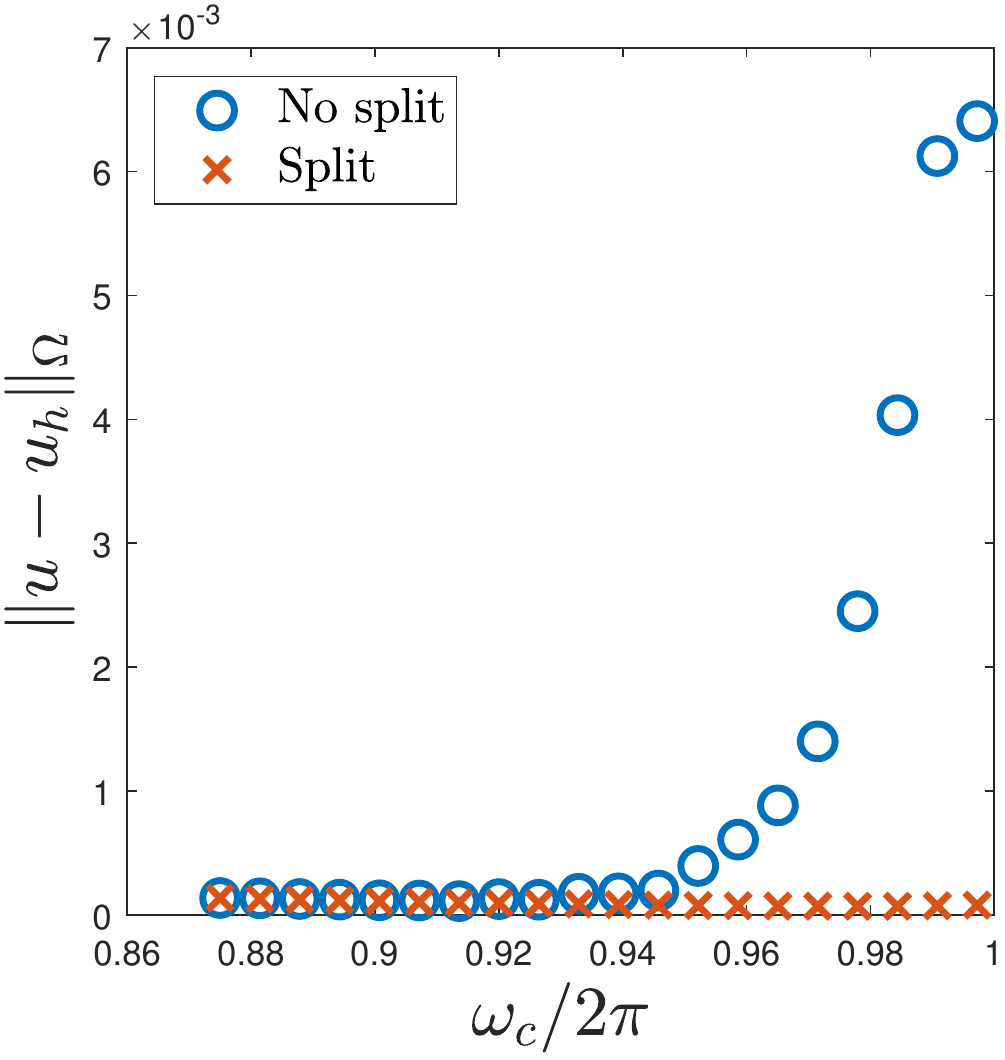}	
		\caption{Error in $L^2$ norm}
		\label{fig:convergence-split-d}
	\end{subfigure}
\caption{
Effects of applying the fix to remove unwanted coupling over the slit to the model problem when using cut $C^1$ spline approximation spaces.
(a)--(b) Convergence in $H^1(\Omega)$ semi-norm and in $L^2(\Omega)$ norm for two different opening angles $\omegac$ with the fix applied. Note that we again recover optimal order convergence, cf. Figure~\ref{fig:convergence-nosplit}.
(c)--(d) Parameter study of how the opening angle $\omegac$ effects the $L^2(\Omega)$ norm and $H^1(\Omega)$ semi-norm errors using a fixed mesh size $h=0.2$.
} 
\label{fig:convergence-split}
\end{figure}

\paragraph{Stability with Respect to Mesh Position.}
In the previous examples we have positioned the mesh such that the nonconvex corner $c$ is located in the center of an element when using $C^1$ spline approximation spaces.
To study how the mesh position in relation to the nonconvex corner $c$ effects the performance of the method we in Figure~\ref{fig:corner-pos} consider 400 random positions of the mesh in a $C^1$ spline approximation space with fixed mesh size $h=0.1$. We note that the method is actually quite insensitive with respect to the mesh position with a standard deviation of the error relative its mean of $0.02$ in $H^1(\Omega)$ semi-norm and of $0.04$ in $L^2(\Omega)$ norm. 

\begin{figure}
	\centering
	\begin{subfigure}{0.32\linewidth}\centering
		\includegraphics[width=1\linewidth]{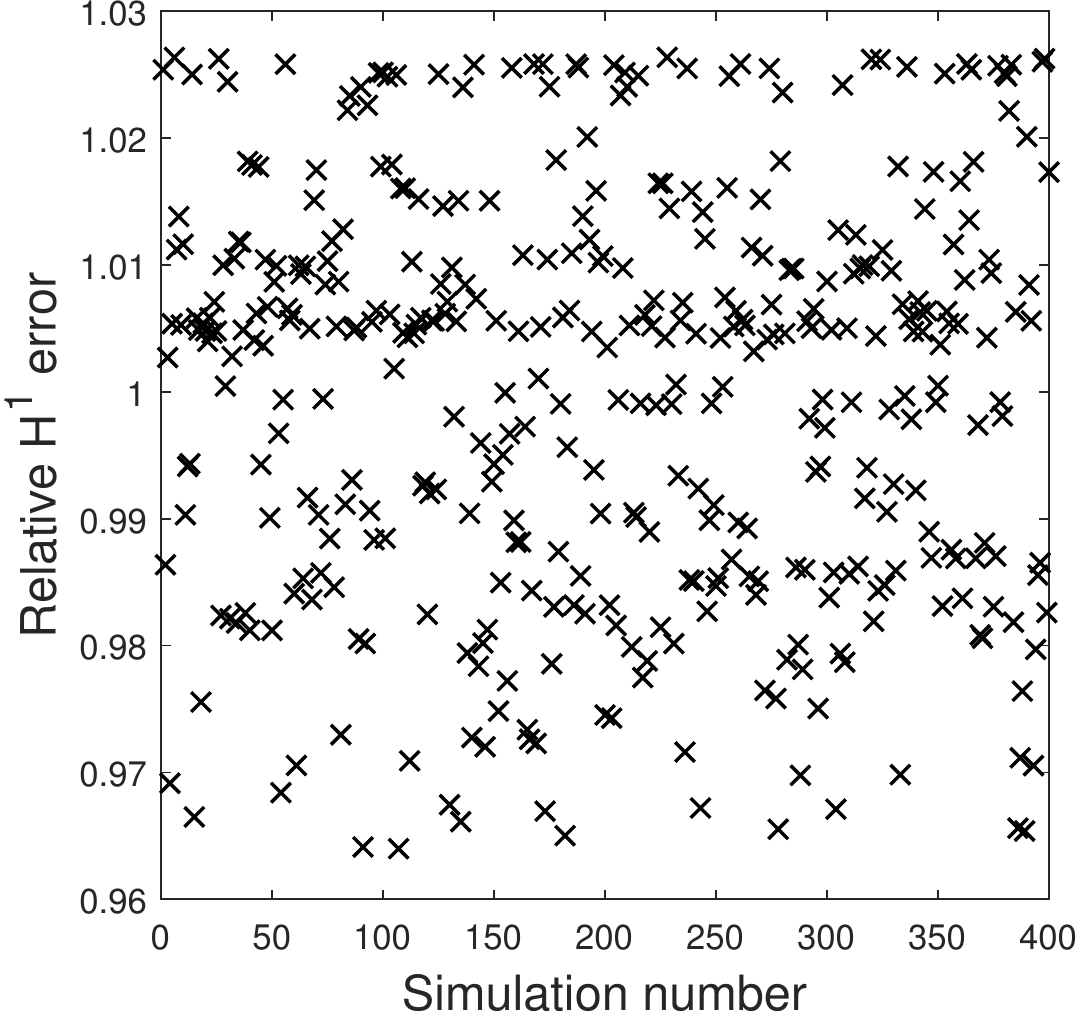}
		\subcaption{Error in $H^1$ semi-norm}
		\label{fig:corner-pos-a}
	\end{subfigure}
	\
	\begin{subfigure}{0.32\linewidth}\centering
		\includegraphics[width=1\linewidth]{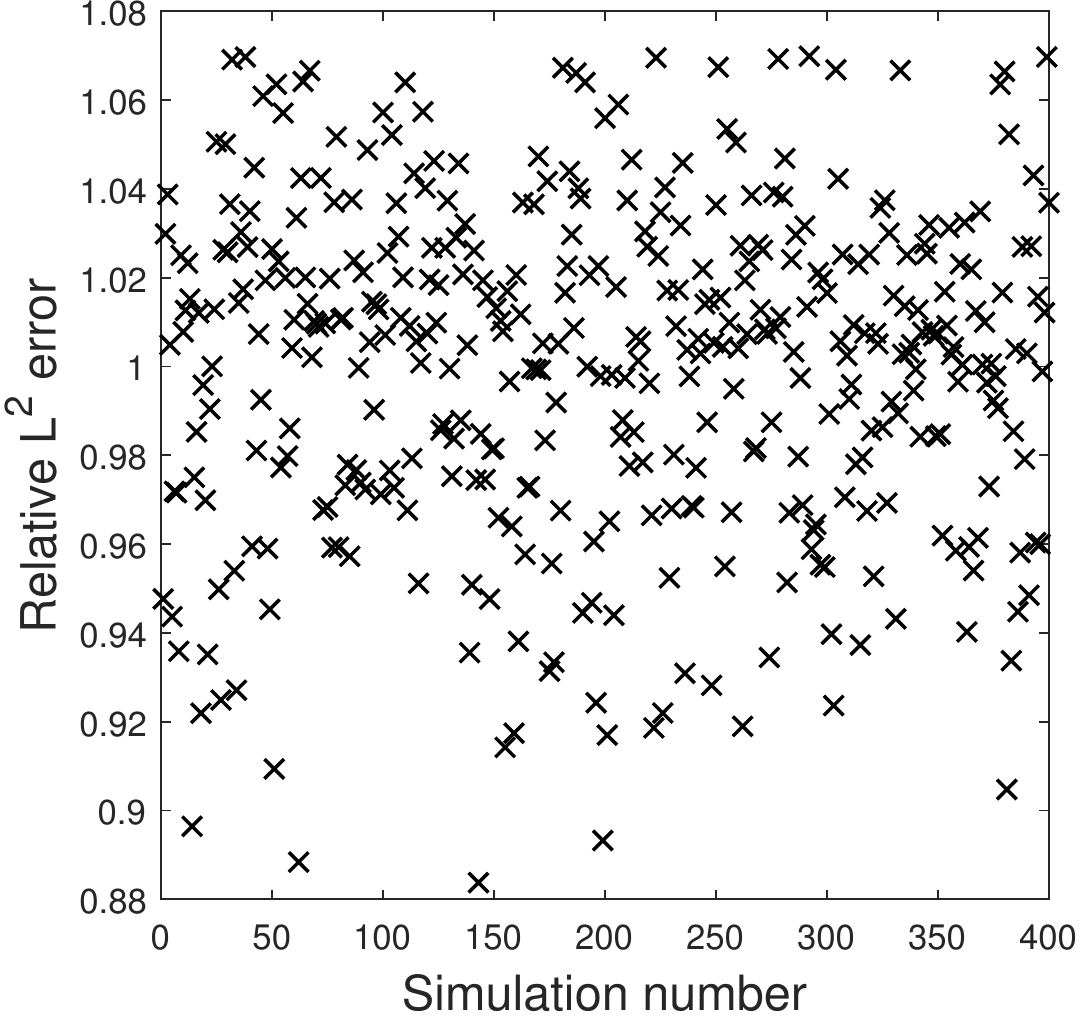}
		\subcaption{Error in $L^2$ norm}
		\label{fig:corner-pos-b}
	\end{subfigure}
	\centering
\caption{
Study of how the mesh position with respect to the location of the nonconvex corner effects the error when using a $C^1$ spline approximation space with a mesh size $h=0.1$ for the model problem with opening angle $\omega_c = 0.75\times 2\pi$. In (a) and (b) errors in $H^1(\Omega)$ semi-norm respectively $L^2(\Omega)$ norm relative to the mean error is presented for 400 uniformly distributed random positions of the mesh. Note that most errors lie within $\pm 4\,\%$ in (a) respectively $\pm 10\,\%$ in (b).
}
\label{fig:corner-pos}
\end{figure}

\subsection{More Examples}

\paragraph{Multiple Nonconvex Corners.}
In Figure~\ref{fig:fesol_multipatch} we present a numerical example on a domain featuring two nonconvex corners.
This is solved using the multipatch approach outlined in Section~\ref{section:multipatch}. The domain is partitioned into three patches where each patch is equipped with its own approximation space and the appropriately chosen map. We note that the finite element solution and gradient magnitude seem to flow nicely over the internal interfaces.

\begin{figure}
	\centering
		\begin{subfigure}[t]{0.32\linewidth}\centering
		\includegraphics[width=0.9\linewidth]{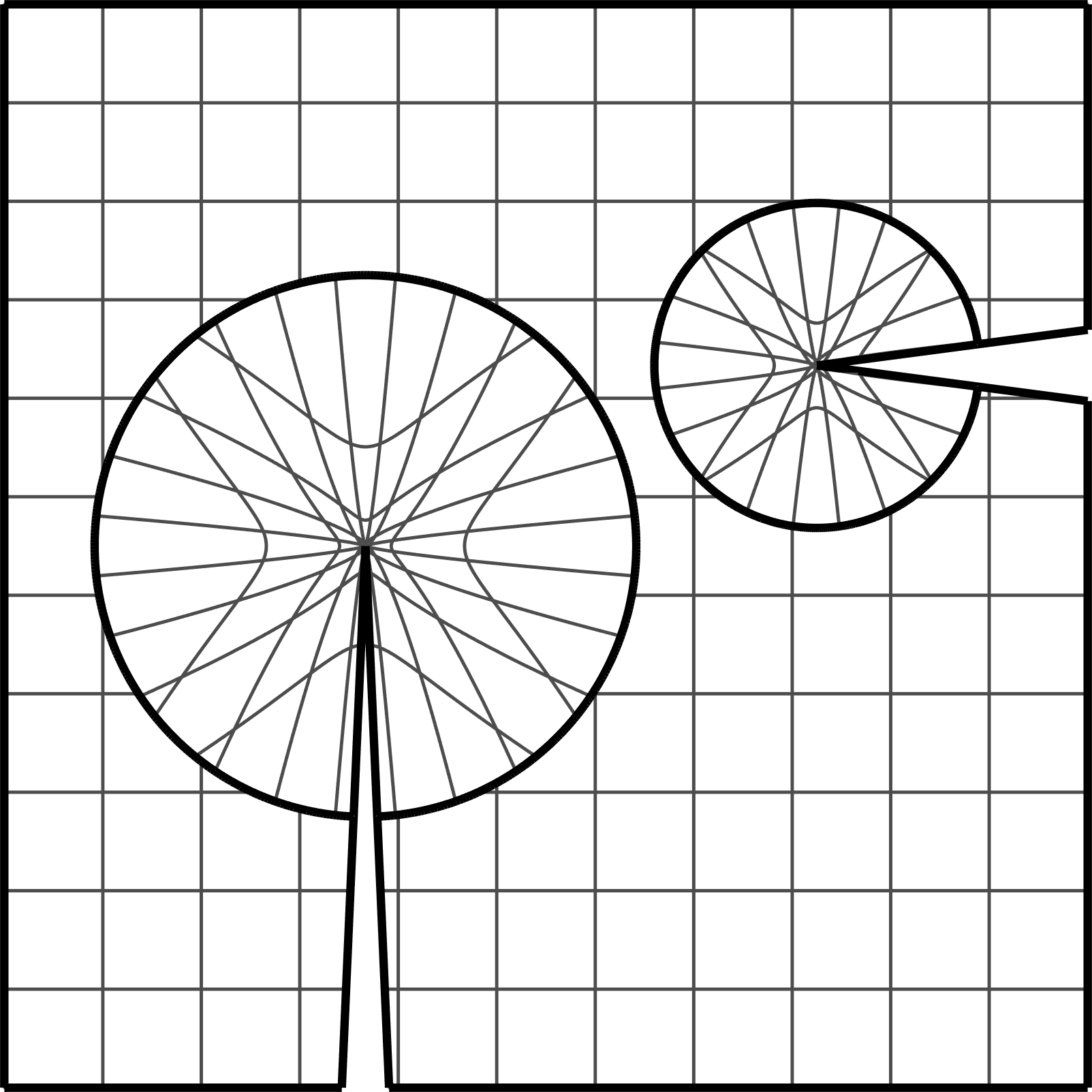}
		\subcaption{Multipatch mesh in the physical domain}
	\end{subfigure}
\
	\begin{subfigure}[t]{0.32\linewidth}\centering
		\includegraphics[width=0.9\linewidth]{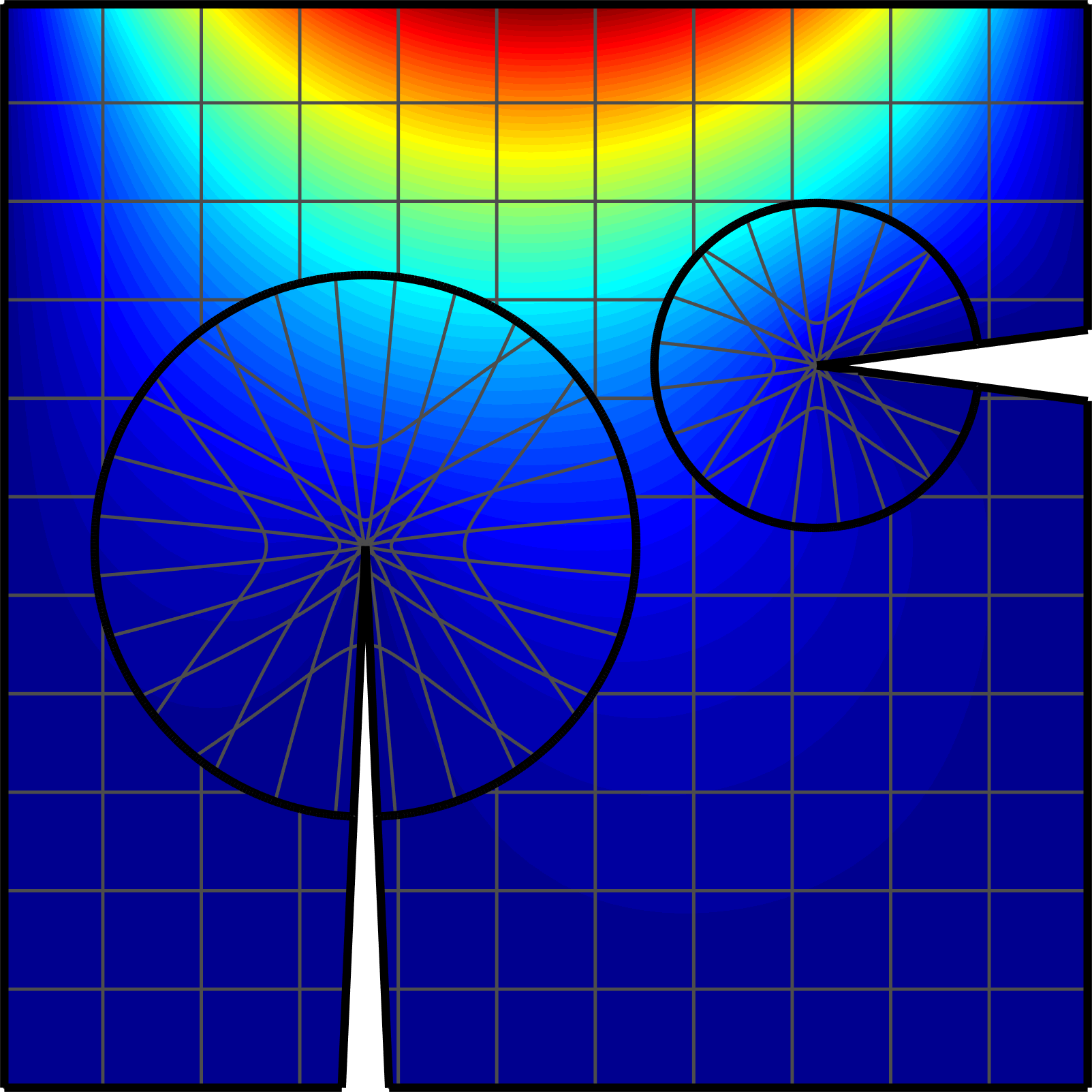}
		%	\label{fig:fitted-refmesh}
		\subcaption{Numerical solution}
	\end{subfigure}
\
	\begin{subfigure}[t]{0.32\linewidth}\centering
		\includegraphics[width=0.9\linewidth]{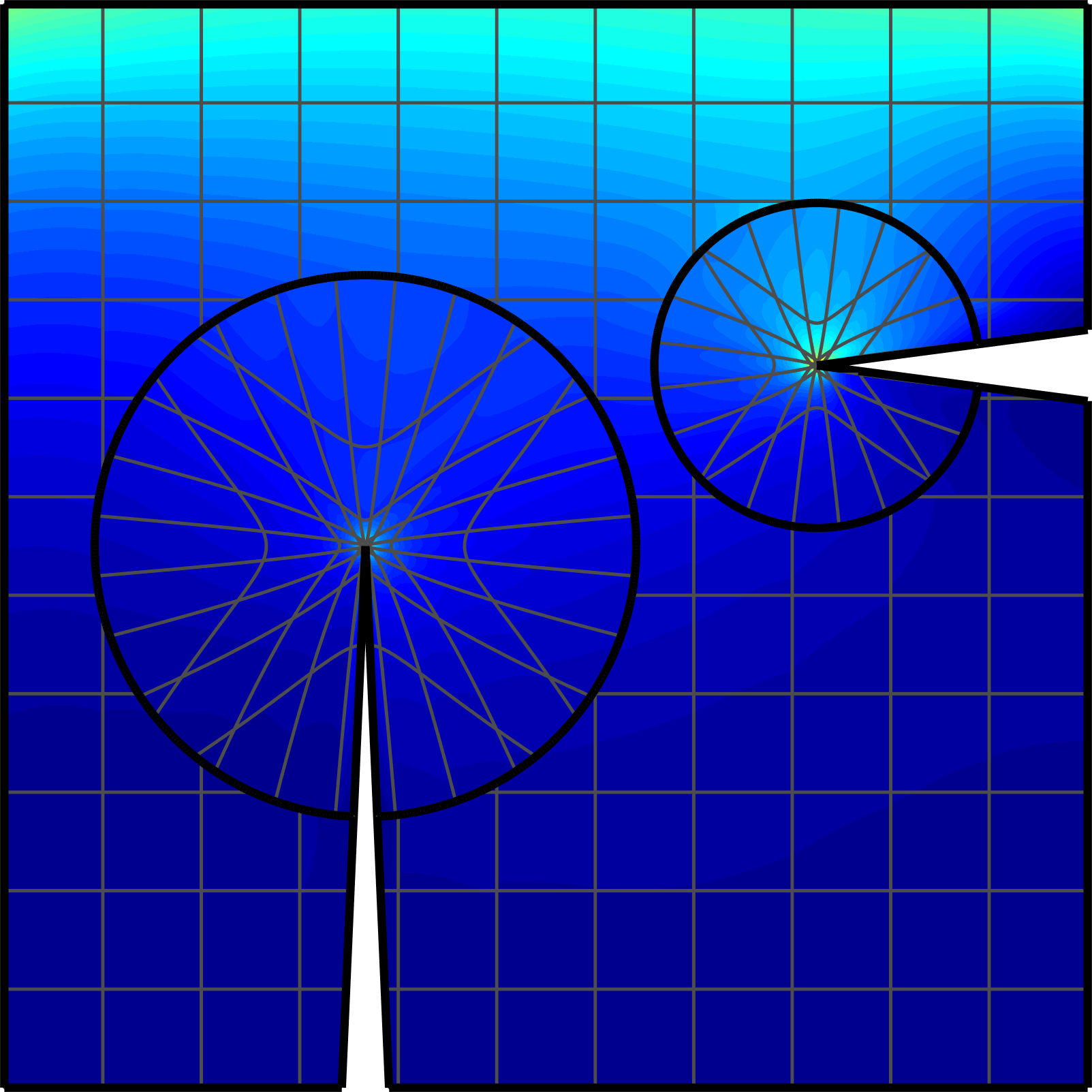}
		\subcaption{Gradient magnitude}
	\end{subfigure}
\caption{Example with two nonconvex corners with $g=x(1-x)$ on the top edge and $g=0$ on the remaining boundary.}
\label{fig:fesol_multipatch}
\end{figure}

\paragraph{Curved Surface.}
Via the parametric map the method naturally handles problems on a curved surface $\Omega_{\IR^3}\subset \IR^3$ by the composite map $F=F_* \circ F_\gamma$ where $F_* : \Omega \to \Omega_{\IR^3}$. As an illustration we present an example solution on a curved surface in Figure~\ref{fig:curvdSrf}.

\begin{figure}
	\centering
	\begin{subfigure}[t]{0.32\linewidth}\centering
		\includegraphics[width=0.9\linewidth]{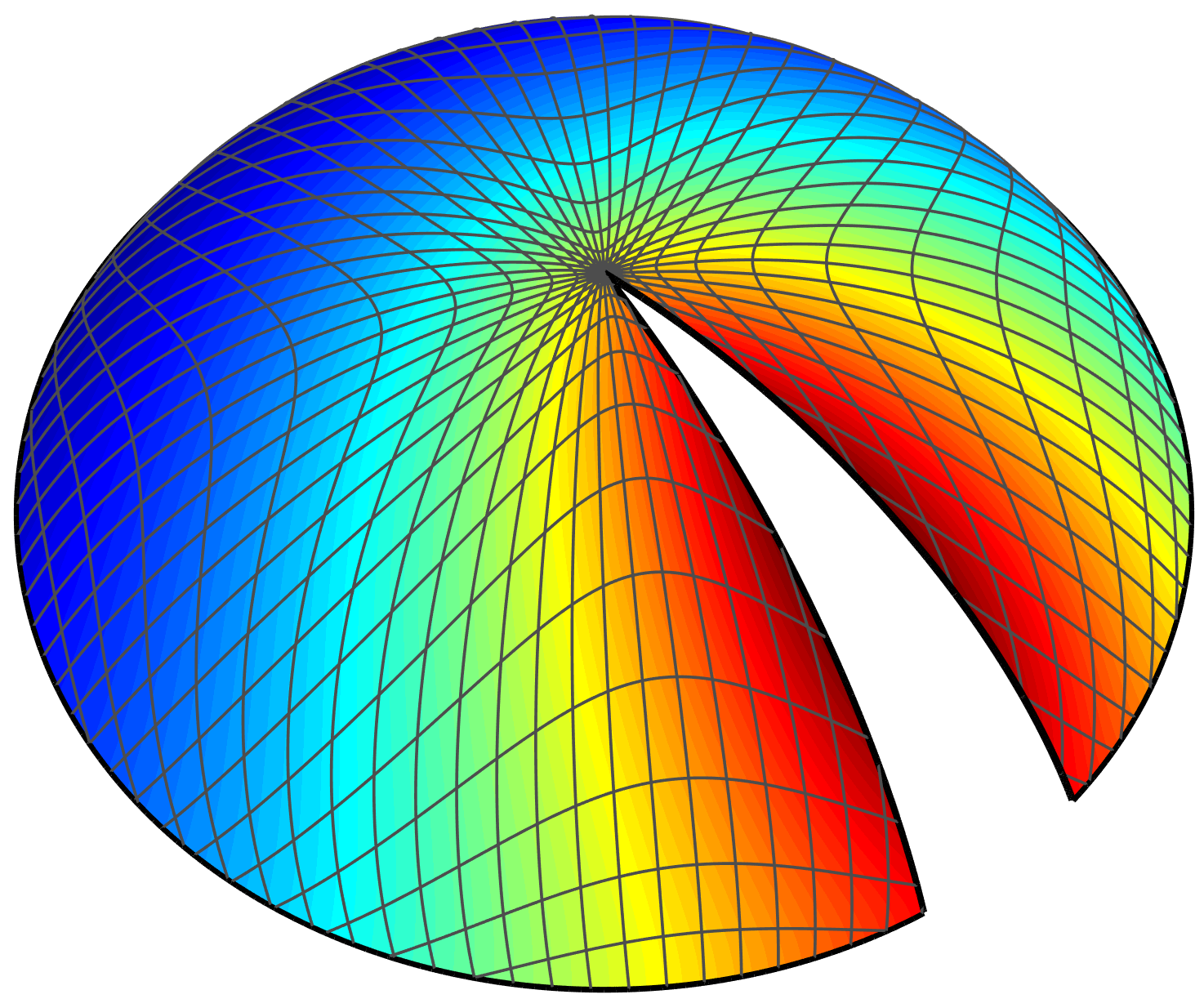}
		\subcaption{Numerical solution}
	\end{subfigure}
	\quad
	\begin{subfigure}[t]{0.32\linewidth}\centering
		\includegraphics[width=0.9\linewidth]{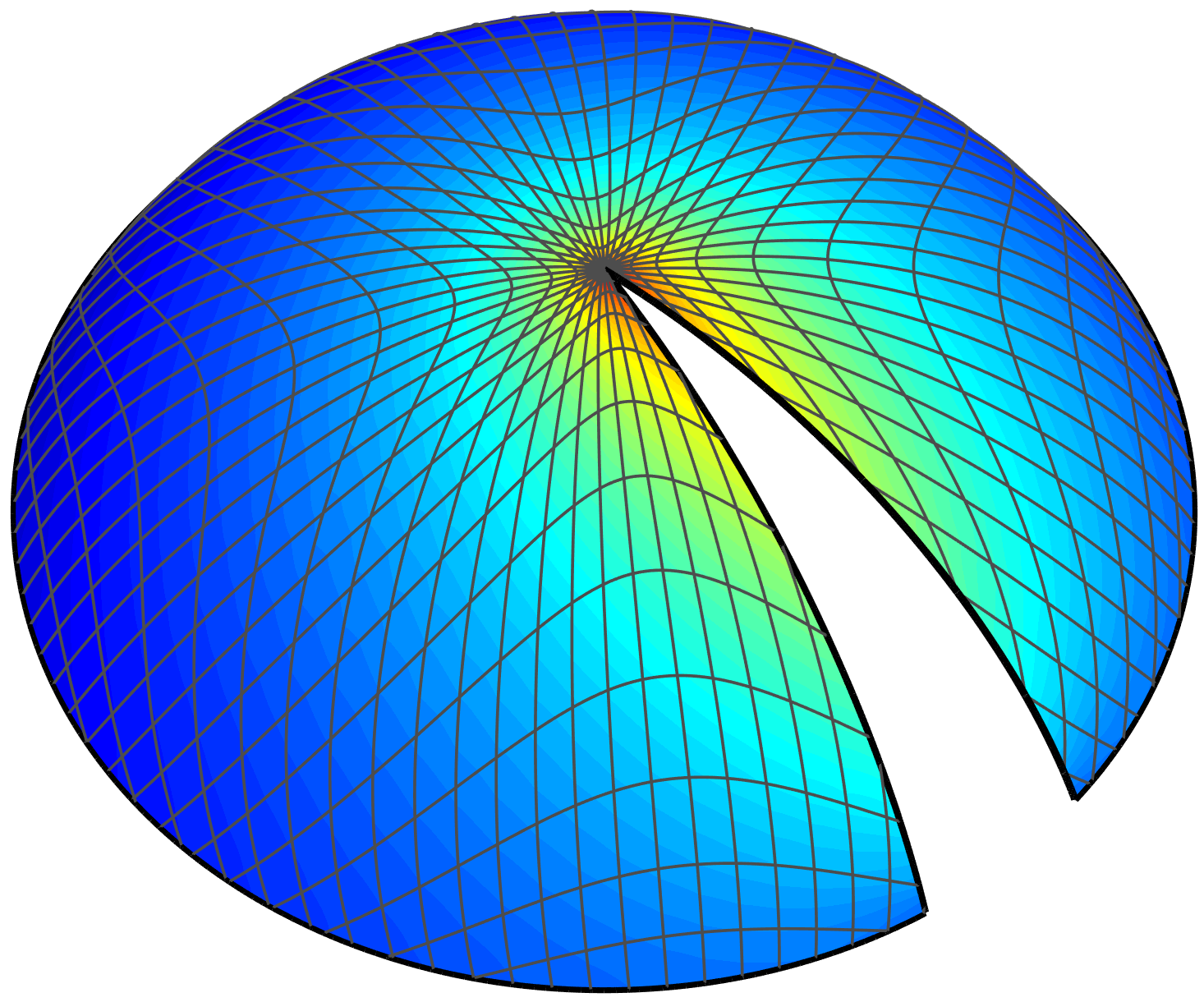}
		\subcaption{Magnitude of the gradient}
	\end{subfigure}
	\caption{
	Numerical solution and gradient magnitude to problem on a curved surface.
	In contrast to the rest of the paper we in this specific example use Neumann boundary conditions.
	}
	\label{fig:curvdSrf}
\end{figure}

\section{Conclusions} \label{section:conclusions}
We have developed a new parametric higher-order cut finite element method for elliptic boundary value problems with corner singularities. It has the following notable features:
\begin{itemize}
\item The method is based on a radial map that suitably grades the mesh towards the singularity. This map can be chosen without knowing the exact opening angle $\omegac < 2\pi$.

\item Numerical instabilities due to unbounded derivatives of the map near the corner are avoided by formulating the method in a reference domain.

\item The method is proven to be stable and to be optimal order convergent in energy and $L^2(\Omega)$ norms.

\item Multiple nonconvex corners are handled by using a previously developed multipatch framework such that each corner can be dealt with individually. 

\item Unwanted coupling over voids induced by the combination of extreme opening angles $\omega_c$ and cut approximation spaces is remedied by a proposed fix that restores the initial optimal order convergence to a large extent.

\end{itemize}

\paragraph{Acknowledgements.} This research was supported in part by the Swedish Foundation
for Strategic Research Grant No. AM13-0029, the Swedish Research Council Grants Nos.
2013-4708, 2017-03911 and the Swedish Research Programme Essence.

\clearpage

\bibliographystyle{habbrv}
{
\footnotesize
\bibliography{weighted-sobolev-refs}

\begin{thebibliography}{10}
\expandafter\ifx\csname url\endcsname\relax
  \def\url#1{\texttt{#1}}\fi
\expandafter\ifx\csname doi\endcsname\relax
  \def\doi#1{\burlalt{doi:#1}{http://dx.doi.org/#1}}\fi
\expandafter\ifx\csname urlprefix\endcsname\relax\def\urlprefix{URL }\fi
\expandafter\ifx\csname href\endcsname\relax
  \def\href#1#2{#2}\fi
\expandafter\ifx\csname burlalt\endcsname\relax
  \def\burlalt#1#2{\href{#2}{#1}}\fi

\bibitem{AS1997}
M.~Ainsworth and B.~Senior.
\newblock Aspects of an adaptive {$hp$}-finite element method: adaptive
  strategy, conforming approximation and efficient solvers.
\newblock {\em Comput. Methods Appl. Mech. Engrg.}, 150(1-4):65--87, 1997.
\newblock \doi{10.1016/S0045-7825(97)00101-1}.
\newblock Symposium on Advances in Computational Mechanics, Vol. 2 (Austin, TX,
  1997).

\bibitem{BanRan2003}
W.~Bangerth and R.~Rannacher.
\newblock {\em Adaptive finite element methods for differential equations}.
\newblock Lectures in Mathematics ETH Z\"{u}rich. Birkh\"{a}user Verlag, Basel,
  2003.
\newblock \doi{10.1007/978-3-0348-7605-6}.

\bibitem{BelGraVen2009}
T.~Belytschko, R.~Gracie, and G.~Ventura.
\newblock A review of extended/generalized finite element methods for material
  modeling.
\newblock {\em Model. Simul. Mater. Sci. Eng.}, 17(4):043001, 2009.
\newblock \doi{10.1088/0965-0393/17/4/043001}.

\bibitem{Burman2010}
E.~Burman.
\newblock Ghost penalty.
\newblock {\em C. R. Math. Acad. Sci. Paris}, 348(21-22):1217--1220, 2010.
\newblock \doi{10.1016/j.crma.2010.10.006}.

\bibitem{BuClHaLaMa15}
E.~Burman, S.~Claus, P.~Hansbo, M.~G. Larson, and A.~Massing.
\newblock Cut{FEM}: discretizing geometry and partial differential equations.
\newblock {\em Internat. J. Numer. Methods Engrg.}, 104(7):472--501, 2015.
\newblock \doi{10.1002/nme.4823}.

\bibitem{BuHa2012}
E.~Burman and P.~Hansbo.
\newblock Fictitious domain finite element methods using cut elements: {II}.
  {A} stabilized {N}itsche method.
\newblock {\em Appl. Numer. Math.}, 62(4):328--341, 2012.
\newblock \doi{10.1016/j.apnum.2011.01.008}.

\bibitem{IGABook}
J.~A. Cottrell, T.~J.~R. Hughes, and Y.~Bazilevs.
\newblock {\em Isogeometric Analysis: Toward Integration of CAD and FEA}.
\newblock Wiley Publishing, 1st edition, 2009.
\newblock \doi{10.1002/9780470749081}.

\bibitem{ElfLarLar18a}
D.~Elfverson, M.~G. Larson, and K.~Larsson.
\newblock Cut{IGA} with basis function removal.
\newblock {\em Adv. Model. Simul. Eng. Sci.}, 5(6):1--19, 2018.
\newblock \doi{10.1186/s40323-018-0099-2}.

\bibitem{ElfLarLar18b}
D.~{Elfverson}, M.~G. {Larson}, and K.~{Larsson}.
\newblock A new least squares stabilized {N}itsche method for cut isogeometric
  analysis.
\newblock {\em Comput. Methods Appl. Mech. Engrg.}, 349:1--16, 2019.
\newblock \doi{10.1016/j.cma.2019.02.011}.

\bibitem{FriBel2010}
T.-P. Fries and T.~Belytschko.
\newblock The extended/generalized finite element method: an overview of the
  method and its applications.
\newblock {\em Internat. J. Numer. Methods Engrg.}, 84(3):253--304, 2010.
\newblock \doi{10.1002/nme.2914}.

\bibitem{HanHanLar2003}
A.~Hansbo, P.~Hansbo, and M.~G. Larson.
\newblock A finite element method on composite grids based on {N}itsche's
  method.
\newblock {\em M2AN Math. Model. Numer. Anal.}, 37(3):495--514, 2003.
\newblock \doi{10.1051/m2an:2003039}.

\bibitem{IGA}
T.~J.~R. Hughes, J.~A. Cottrell, and Y.~Bazilevs.
\newblock Isogeometric analysis: {CAD}, finite elements, {NURBS}, exact
  geometry and mesh refinement.
\newblock {\em Comput. Methods Appl. Mech. Engrg.}, 194(39-41):4135--4195,
  2005.
\newblock \doi{10.1016/j.cma.2004.10.008}.

\bibitem{JeOhKaKi2013}
J.~W. Jeong, H.-S. Oh, S.~Kang, and H.~Kim.
\newblock Mapping techniques for isogeometric analysis of elliptic boundary
  value problems containing singularities.
\newblock {\em Comput. Methods Appl. Mech. Engrg.}, 254:334--352, 2013.
\newblock \doi{10.1016/j.cma.2012.09.009}.

\bibitem{MR3627181}
Y.~Jin, O.~A. Gonz\'{a}lez-Estrada, O.~Pierard, and S.~P.~A. Bordas.
\newblock Error-controlled adaptive extended finite element method for 3{D}
  linear elastic crack propagation.
\newblock {\em Comput. Methods Appl. Mech. Engrg.}, 318:319--348, 2017.
\newblock \doi{10.1016/j.cma.2016.12.016}.

\bibitem{JoLaLa2017}
T.~Jonsson, M.~G. Larson, and K.~Larsson.
\newblock Cut finite element methods for elliptic problems on multipatch
  parametric surfaces.
\newblock {\em Comput. Methods Appl. Mech. Engrg.}, 324:366--394, 2017.
\newblock \doi{10.1016/j.cma.2017.06.018}.

\bibitem{MR0226187}
V.~A. Kondrat'ev.
\newblock Boundary value problems for elliptic equations in domains with
  conical or angular points.
\newblock {\em Trudy Moskov. Mat. Ob\v{s}\v{c}.}, 16:209--292, 1967.

\bibitem{LiLu2000}
Z.~C. Li and T.~T. Lu.
\newblock Singularities and treatments of elliptic boundary value problems.
\newblock {\em Math. Comput. Modelling}, 31(8-9):97--145, 2000.
\newblock \doi{10.1016/S0895-7177(00)00062-5}.

\bibitem{MaLaLoRo2013a}
A.~Massing, M.~G. Larson, A.~Logg, and M.~E. Rognes.
\newblock A stabilized {N}itsche fictitious domain method for the {S}tokes
  problem.
\newblock {\em J. Sci. Comput.}, 61(3):604--628, 2014.
\newblock \doi{10.1007/s10915-014-9838-9}.

\bibitem{MR3372009}
V.~P. Nguyen, C.~Anitescu, S.~P.~A. Bordas, and T.~Rabczuk.
\newblock Isogeometric analysis: an overview and computer implementation
  aspects.
\newblock {\em Math. Comput. Simulation}, 117:89--116, 2015.
\newblock \doi{10.1016/j.matcom.2015.05.008}.

\bibitem{OhKiJe2014}
H.-S. Oh, H.~Kim, and J.~W. Jeong.
\newblock Enriched isogeometric analysis of elliptic boundary value problems in
  domains with cracks and/or corners.
\newblock {\em Internat. J. Numer. Methods Engrg.}, 97(3):149--180, 2014.
\newblock \doi{10.1002/nme.4580}.

\end{thebibliography}
}

%\vfill
\bigskip
\bigskip
{
\footnotesize{
\begin{samepage}

\bigskip
\bigskip
\noindent
{\bf Authors' addresses:}

\smallskip
\noindent
Tobias Jonsson,  \quad \hfill \addressumushort\\
{\tt tobias.jonsson@umu.se}

\smallskip
\noindent
Mats G. Larson,  \quad \hfill \addressumushort\\
{\tt mats.larson@umu.se}

\smallskip
\noindent
Karl Larsson, \quad \hfill \addressumushort\\
{\tt karl.larsson@umu.se}
\end{samepage}
}

\clearpage
\appendix
\normalsize

\section{Riemannian Calculus Approach} \label{appendix:riemann}

We first change coordinates from Euclidean to polar and then we change 
from polar to weighted polar.

\subsection{Polar Coordinates}

Consider the mapping $F_P : B_P \rightarrow B \subset \IR^2$ where
$\IR^2$ is equipped with the Euclidean inner product and 
\begin{equation}
F_P(r,\theta) = 
\left[\begin{matrix}  
r\cos \theta
\\
r\sin \theta
\end{matrix}
\right]
\end{equation}
with the partial derivatives 
\begin{equation}
\partial_r F_P = \left[\begin{matrix}  
\cos \theta
\\
\sin \theta
\end{matrix}
\right],
\qquad
\partial_\theta F_P = \left[\begin{matrix}  
-r\sin \theta
\\
r\cos \theta
\end{matrix}
\right]
\end{equation}
The metric tensor $G_P$ is defined $(G_P)_{ij} = \partial_i F_P \cdot \partial_j F_P$
which after simplification takes the form
%where we recall that by using Euclidean inner product it takes the form
\begin{equation}
G_P = \left[\begin{matrix}  
1 & 0
\\
0 & r^2
\end{matrix}
\right]
\end{equation}
with inverse
\begin{equation}
G_P^{-1} 
= \left[\begin{matrix}  
1 & 0
\\
0 & r^{-2}
\end{matrix}
\right]
\end{equation}
and determinant 
\begin{equation}
|G| = r^{2}
\end{equation}

\paragraph{Gradient.}
Let $v: B \rightarrow \IR$ and let $v_P: B_P \rightarrow \IR$ be the pullback 
\begin{equation}
v_P = v \circ F_P
\end{equation}
The gradient satisfies
\begin{equation}
\nabla v = (D_PF_P) \cdot  (\nabla v )_P, \qquad (\nabla v)_P = G_P^{-1} \nabla_P v_P
\end{equation} 
which gives 
\begin{equation}
(\nabla v)_P = 
 \left[
 \begin{matrix}  
1 & 0
\\
0 & r^{-2}
\end{matrix}
\right]
\; 
 \left[\begin{matrix}  
\partial_r v_P
\\
\partial_\theta v_P 
\end{matrix}
\right]
\end{equation}
and 
\begin{align}
\nabla v &= 
 \left[\begin{matrix}  
\cos \theta & -r \sin \theta 
\\
\sin \theta & r \cos \theta
\end{matrix}
\right]
 \left[
 \begin{matrix}  
1 & 0
\\
0 & r^{-2}
\end{matrix}
\right]
\; 
 \left[\begin{matrix}  
\partial_r v_P
\\
\partial_\theta v_P 
\end{matrix}
\right]
\\
&= 
 \left[\begin{matrix}  
\cos \theta 
\\
\sin \theta
\end{matrix}
\right]\partial_r v_P 
+ 
 r^{-1} \left[\begin{matrix}  
 -\sin \theta 
\\
 \cos \theta
\end{matrix}
\right]
 \partial_\theta v_P 
\end{align}

\paragraph{Bilinear Form.}
The bilinear form associated with the Laplacian transforms as follows
\begin{align}
\int_B \nabla v \cdot \nabla w d\mu 
&=
\int_{B_P} (\nabla v)_P \cdot G_P  \cdot (\nabla w)_P  d\mu_P
\\
&=
\int_{B_P} (\nabla_P v_P ) \cdot G^{-1}_P  \cdot (\nabla_P w_P)  |G_P|^{1/2} d r d\theta
\\
&= 
\int_{B_P} \left( \partial_r v_P \partial_r w_P + r^{-2} \partial_\theta v_P \partial_\theta w_P \right)   r d r d\theta
\end{align} 
where $d\mu = dx dy$ is the standard measure on $\IR^2$ and $d\mu_P$ is the pullback 
measure.

\paragraph{Consistency Term.}
Correspondingly the consistency term transforms as follows
\begin{align}
\int_{\partial B} (n\cdot\nabla v) w \, ds
&=
\int_{\partial B_P} ((n)_P \cdot G_P \cdot (\nabla v)_P) w_P \, (ds)_P
\end{align}
where $ds$ is the standard line measure on $\IR^2$ and $(ds)_P$ is the pullback measure.

\subsection{Radial Mapping in Polar Coordinates} 

Consider the mapping from $B_{\hatP} \rightarrow B_P$ where $B_P$ is a disc 
centered at the origin such that 
\begin{equation}
F_{\hatP}: 
\left[
\begin{matrix}
\hatr
\\
\hattheta
\end{matrix}
\right]
\mapsto 
\left[
\begin{matrix}
\hatr^\gamma
\\
\hattheta
\end{matrix}
\right]
\end{equation}
with the partial derivatives
\begin{equation}
\partial_{\hatr} F_{\hatP} 
= 
\left[
\begin{matrix}
\gamma \hatr^{\gamma -1}
\\
0
\end{matrix}
\right],
\qquad
\partial_{\hattheta} F_{\hatP} 
= 
\left[
\begin{matrix}
0
\\
1
\end{matrix}
\right]
\end{equation}
The induced metric has elements 
$(G_{\hatP})_{ij}  = \partial_i F_{\hatP} \cdot G_P \cdot \partial_j F_{\hatP}$, 
where we use the polar metric inner product,
\begin{equation}
G_{\hatP} 
= 
\left[ 
\begin{matrix}
\gamma^2 \hatr^{2(\gamma -1)} & 0
\\
0 & r^2
\end{matrix}
\right]
=
\left[ 
\begin{matrix}
\gamma^2 \hatr^{2(\gamma -1)} & 0
\\
0 & \hatr^{2\gamma}
\end{matrix}
\right]
=
 \hatr^{2(\gamma -1)} \left[ 
\begin{matrix}
\gamma^2 & 0
\\
0 & \hatr^{2}
\end{matrix}
\right]
\end{equation}
with inverse
\begin{equation}
G_{\hatP} ^{-1}
= 
 \hatr^{-2(\gamma -1)} \left[ 
\begin{matrix}
\gamma^{-2} & 0
\\
0 & \hatr^{-2}
\end{matrix}
\right]
\end{equation}
and determinant
\begin{equation}
|G_{\hatP}| = \gamma^2 \hatr^{(4\gamma - 2)} , 
\qquad 
|G_{\hatP}|^{1/2}  =   \gamma \hatr^{2(\gamma-1)} \hatr
\end{equation}

\paragraph{Bilinear Form.}
The bilinear form associated with the Laplacian transforms as follows
\begin{align}\nonumber
&\int_{B_P} (\nabla v)_P \cdot G_P  \cdot (\nabla_P w)_P |G_P|^{1/2} dr d\theta
\\ 
&\qquad = \int_{B_{\hatP}} (\nabla v)_{\hatP} \cdot G_{\hatP}  \cdot (\nabla w)_{\hatP}  |G_{\hatP}|^{1/2} d\hatr d \hattheta
\\
&\qquad =
\int_{B_{\hatP}} \nabla_{\hatP}  v_{\hatP} \cdot G^{-1}_{\hatP}  \cdot \nabla_{\hatP}  w_{\hatP} 
 |G_{\hatP}|^{1/2} d\hatr d \hattheta
\\
&\qquad =
\int_{B_{\hatP}} 
\Big( \gamma^{-1} \partial_{\hatr} v_{\hatP} \partial_{\hatr} w_{\hatP} 
+ \gamma \hatr^{-2} \partial_{\hattheta} v_{\hatP} \partial_{\hattheta} w_{\hatP} \Big) 
\hatr d\hatr d \hattheta
\end{align} 
Thus we obtain a form with a $\gamma$ dependent scaling of the radial and angular derivatives and 
in particular we note that no radial weight is present.
Transforming back to Cartesian coordinates in the reference domain we have the identity 
\begin{align}\nonumber
&\int_{B_{\hatP}} 
\Big( \gamma^{-1} \partial_{\hatr} v_{\hatP} \partial_{\hatr} w_{\hatP} 
+ \gamma \hatr^{-2} \partial_{\hattheta} v_{\hatP} \partial_{\hattheta} w_{\hatP} \Big) 
\hatr d\hatr d \hattheta
%\\&\qquad
=\int_{\widehat{B}} 
\widehat{\nabla} \hatv\cdot S_{\hattheta}^{T} D_\gamma S_{\hattheta} \cdot \widehat{\nabla} \hatv \;
 d\hatx d\haty
\end{align}
where 
\begin{equation}
D_\gamma 
= \left(
\begin{matrix}
\gamma^{-1} &  0
\\
0 & \gamma 
\end{matrix}
\right),
\qquad 
S_{\hattheta} 
= \left(
\begin{matrix}
 \cos \hattheta &  \sin \hattheta
\\
-\sin \hattheta & \cos \hattheta
\end{matrix}
\right)
\end{equation}
and we used the identities
\begin{equation}
\left( 
\begin{matrix}
\partial_{\hatr} v
\\
{\hatr}^{-1} \partial_{\hattheta} \hatv 
\end{matrix}
\right)
=
S_{\hattheta}
\left( 
\begin{matrix}
\partial_{\hatx} \hatv
\\
 \partial_{\haty}  \hatv 
\end{matrix}
\right), \qquad \hatr d\hatr d \hattheta = d \hatx d \haty
\end{equation}

\paragraph{Consistency Term.}
Correspondingly the consistency term transforms as follows
\begin{align}
\int_{\partial B_P} ((n)_P \cdot G_P \cdot (\nabla v)_P) w_P \, ds_P
&=
\int_{\partial B_{\hatP}} ((n)_{\hatP} \cdot G_{\hatP} \cdot (\nabla v)_{\hatP}) w_{\hatP} \, (ds)_{\hatP}
\\&=
\int_{\partial B_{\hatP}} (n_{\hatP} \cdot G_{\hatP}^{-1} \cdot \nabla_{\hatP} v_{\hatP}) w_{\hatP} \gamma \hatr^{2(\gamma - 1)} \hatr  \, ds_{\hatP}
\\&=
\int_{\partial B_{\hatP}} \left(n_{\hatP} \cdot
\begin{bmatrix}
1 & 0 \\ 0 & \hatr^{-1}
\end{bmatrix}
D_\gamma 
\begin{bmatrix}
1 & 0 \\ 0 & \hatr^{-1}
\end{bmatrix}
\cdot \nabla_{\hatP} v_{\hatP} \right) w_{\hatP} \hatr  \, ds_{\hatP}
\end{align}
where $ds_{\hatP}$ is the standard line measure on $\IR^2$.
Here we used that
\begin{align}
(n)_{\hatP} = G_{\hatP}^{-1} n_{\hatP} (n_{\hatP} \cdot G_{\hatP}^{-1} \cdot n_{\hatP})^{-1/2} \,,
\qquad
(ds)_{\hatP} = (t_{\hatP} \cdot G_{\hatP} \cdot t_{\hatP})^{1/2} ds_{\hatP}
\end{align}
and also utilized the simplification
\begin{equation}
\left(
\frac{t_{\hatP} \cdot G_{\hatP} \cdot t_{\hatP}}{n_{\hatP} \cdot G_{\hatP}^{-1} \cdot n_{\hatP}}
\right)^{1/2}
=
\gamma \hatr^{2(\gamma - 1)} \hatr
\end{equation}
Transforming back to Cartesian coordinates in the reference domain we have the identity
\begin{align}
\int_{\partial B_{\hatP}} \left(n_{\hatP} \cdot
\begin{bmatrix}
1 & 0 \\ 0 & \hatr^{-1}
\end{bmatrix}
D_\gamma 
\begin{bmatrix}
1 & 0 \\ 0 & \hatr^{-1}
\end{bmatrix}
\cdot \nabla_{\hatP} v_{\hatP} \right) w_{\hatP} \hatr  \, ds_{\hatP}
&=
\int_{\partial \widehat{B}} \left(\hatn \cdot
S_{\hattheta}^T D_\gamma S_{\hattheta}
\cdot \hatnabla \hatv \right) \hatw  \, d\hats
\end{align}
where we used the identities
\begin{equation}
\begin{pmatrix}
n_{\hatr} \\ \hatr^{-1} n_{\hattheta}
\end{pmatrix}
=
S_{\hattheta}
\begin{pmatrix}
n_{\hatx} \\ n_{\haty}
\end{pmatrix}
=
S_{\hattheta}
\hatn
, \qquad \hatr ds_{\hatP} = d \hats
\end{equation}

%\item
%The line measure $ds$ for physical Cartesian coordinates has the following
%relation to the line measure $d\hats$ in reference Cartesian coordinates.
%\begin{equation}
%ds^2 =
%\gamma \hatr^{2(\gamma-1)} (\hatn^T S_{\hattheta}^{T} D_\gamma S_{\hattheta} \hatn) \, d\hats^2
%\end{equation}

\end{document}